\documentclass[12pt,reqno]{amsart}
\usepackage{amsmath, amsfonts, amsthm, mathtools, fullpage, hyperref, xcolor, enumerate}
\usepackage[capitalise]{cleveref}
\usepackage[msc-links, alphabetic]{amsrefs}

\DeclareMathOperator{\diam}{diam}

\newtheorem{thm}{Theorem}[section]

\newtheorem{lemma}[thm]{Lemma}
\newtheorem{prop}[thm]{Proposition}
\theoremstyle{remark}

\newcommand\restr[2]{{
	\left.\kern-\nulldelimiterspace 
	#1 
	\littletaller 
	\right|_{#2} 
	}}

\newcommand{\littletaller}{\mathchoice{\vphantom{\big|}}{}{}{}}

\title{Higher-order Cheeger inequalities for graphons}
\author{Mugdha Mahesh Pokharanakar}
\address{Department of Mathematics, 
Indian Institute of Science Education and Research Bhopal, 
Bhopal Bypass Road, Bhauri, Bhopal 462066, Madhya Pradesh, India}
\curraddr{}
\email{mugdha22@iiserb.ac.in}
\thanks{}
\subjclass[2010]{05C50}

\keywords{Graphons, graph limits, higher-order Cheeger inequalities.}

\begin{document}

\begin{abstract}
	The higher-order Cheeger inequalities were established for graphs 
	by Lee, Oveis Gharan and Trevisan.
	We prove analogous inequalities for graphons in this article.
\end{abstract}

\maketitle

\section{Introduction}

\subsection{Motivation}

Cheeger \cite{Cheeger-manifolds70} and Buser \cite{Buser-manifolds82} established 
the inequalities that relate the smallest positive eigenvalue of 
the Laplace--Beltrami operator on a compact Riemannian manifold to its isoperimetric constant. 
These inequalities are together known as the Cheeger--Buser inequality.
The discrete analogs of this inequality for finite graphs were proved by 
Dodziuk \cite{Dodziuk84}, Tanner \cite{Tanner-vertexCheegerEasy}, 
Alon--Milman \cite{Alon-Milman85} and Alon \cite{Alon-vertexCheeger86}.
They give a relation between the smallest positive eigenvalue of 
the Laplacian of a graph and its Cheeger constant.
Lawler and Sokal \cite{Lawler-Sokal-MarkovCheeger88} established 
similar inequality for Markov chains.

Graphons are symmetric Lebesgue measurable functions from $[0,1]^2$ to $[0,1]$.
They were introduced and studied by Lov\'asz, Borgs, Chayes, S\'os, Szegedy, Vesztergombi, 
and others \cites{Lovasz-Szegedy-limitsofDenseGraphs06, Lovasz-etal-Counthomo06, 
Lovasz-etal-cgtseqDenseGraphs08} as limits of sequences of finite graphs.
Recently, Khetan and Mj \cite{Abhishek-Mahan24} established an analog of 
the Cheeger--Buser inequality for graphons. 
They proved that for any connected graphon $W$, the inequality
\begin{equation*}
    \frac{h_W^2}{8} \leq \lambda_W \leq 2 h_W
\end{equation*}
holds, where $h_W$ and $\lambda_W$ denote the Cheeger constant of $W$ 
and the bottom of the spectrum of the Laplacian of $W$, respectively.

Higher-order analogs of the Cheeger--Buser inequality, 
known as the higher-order Cheeger inequalities, were conjectured by 
Miclo \cite{Miclo-HOCIconj} for finite graphs, and he proved those for trees.
More details about this conjecture and its proof in the case of cycles were given by
Daneshgar, Javadi and Miclo \cite{Daneshgar-etal-HOCIconj12}.
Lee, Oveis Gharan and Trevisan \cite{Higher-Cheeger} established the conjecture,
proving that for every finite connected graph $G = (V,E)$ and 
$1 \leq k \leq \lvert V \rvert$, the inequality 
\[
	\frac{\lambda_k}{2} \leq \rho_G(k) \leq O(k^2) \sqrt{\lambda_k}
\]
holds, where $\lambda_k$ is the $k$-th smallest eigenvalue of the Laplacian of $G$,
and $\rho_G(k)$ is the $k$-way expansion constant of $G$. 
Miclo established the higher-order Cheeger inequalities for self-adjoint Markov operators
\cite[Proposition 5]{Miclo-MarkovSpectrum15} and 
compact Riemannian manifolds \cite[Theorem 7]{Miclo-MarkovSpectrum15}. 

\subsection{Results obtained}

In this article, we prove the following theorem 
establishing higher-order Cheeger inequalities for graphons.
\begin{thm} \label{thm:HOCI-graphons}
	Given a connected graphon $W$, the inequality 
	\[
		\frac{\lambda_k}{2} \leq h_W(k) \leq O(k^{3.5}) \sqrt{\lambda_k} 
	\]
	holds, for every positive integer $k$. 
\end{thm}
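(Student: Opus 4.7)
The plan is to mirror the strategy of Lee, Oveis Gharan and Trevisan for finite graphs \cite{Higher-Cheeger}, transposing each ingredient to the measure-theoretic setting of graphons. For the lower bound $\lambda_k \le 2\, h_W(k)$ I would take a near-optimal $k$-way measurable partition $S_1,\ldots,S_k$ of $[0,1]$ witnessing $h_W(k)$. The indicator functions $\mathbf{1}_{S_i}$ are pairwise orthogonal in $L^2([0,1])$ and span a $k$-dimensional test subspace; a direct computation shows that the Dirichlet quotient of each $\mathbf{1}_{S_i}$ equals twice the isoperimetric ratio of $S_i$, hence is at most $2\, h_W(k)$. Invoking the min--max characterization of eigenvalues of the compact self-adjoint graphon Laplacian then yields the claim.

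The upper bound $h_W(k) \le O(k^{3.5})\sqrt{\lambda_k}$ is the substantive direction, and I would carry it out in three steps. First, since the graphon integral operator is Hilbert--Schmidt and self-adjoint, I assemble an $L^2$-orthonormal system $f_1,\ldots,f_k$ of Laplacian eigenfunctions for the first $k$ eigenvalues and form the spectral embedding $F\colon [0,1] \to \mathbb{R}^k$, $F(x)=(f_1(x),\ldots,f_k(x))$, which satisfies $\int_0^1 \|F(x)\|^2\,dx = k$ and $\sum_{i=1}^k \mathcal{E}_W(f_i) \le k\lambda_k$. Second, I perform the localization step: produce $k$ Borel-measurable functions $\psi_1,\ldots,\psi_k\colon[0,1]\to\mathbb{R}_{\ge 0}$ with pairwise disjoint supports such that each Rayleigh quotient $\mathcal{E}_W(\psi_i)/\|\psi_i\|_2^2$ is bounded by a fixed polynomial in $k$ times $\lambda_k$. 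The padded random-decomposition scheme of \cite{Higher-Cheeger} can be transported here: partition $\mathbb{R}^k$ into random balls of carefully chosen radius, pull each cell back to $[0,1]$ through $F$, and smooth-truncate using a distance-to-boundary weight. All random sets and weights remain jointly measurable on $[0,1]^2$, so the expectation argument transfers verbatim from counting measure to Lebesgue measure. Third, applying the level-set sweep underlying the graphon Cheeger inequality of \cite{Abhishek-Mahan24} to each $\psi_i$ extracts a measurable $S_i \subseteq \operatorname{supp}(\psi_i)$ with $h_W(S_i)\le O\bigl(\sqrt{\mathcal{E}_W(\psi_i)/\|\psi_i\|_2^2}\bigr)$; because the $\psi_i$ have disjoint supports, so do the $S_i$, yielding a valid $k$-way partition that witnesses the desired bound.

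The main obstacle will be the localization step. The padded-decomposition construction rests on a delicate balance between the $L^2$ mass inside each ball and the gradient ``leakage'' across its shell, and transporting it to a measure space that is not locally Euclidean forces one to replace vertex degrees by the graphon marginals $d_W(x)=\int_0^1 W(x,y)\,dy$ and to verify joint measurability of every random construction on $[0,1]^2$. Additional slack is also needed because the Khetan--Mj Cheeger inequality \cite{Abhishek-Mahan24} loses a factor of $4$ (constant $\tfrac{1}{8}$ in place of $\tfrac{1}{2}$) relative to the finite-graph Cheeger inequality; propagating these losses through the three-step argument is what inflates the exponent from the $2$ of \cite{Higher-Cheeger} to $3.5$.
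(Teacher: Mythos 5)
Your high-level architecture mirrors the paper's: lower bound from a test subspace spanned by indicator functions, upper bound via a spectral embedding $F\colon I\to\mathbb{R}^k$, a random partition of $\mathbb{R}^k$, disjointly supported localized functions, and a level-set sweep. Nonetheless, there are three genuine problems as written.

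First and most substantively, you ``assemble an $L^2$-orthonormal system of Laplacian eigenfunctions for the first $k$ eigenvalues.'' This does not exist in general. The integral operator $T_W$ is Hilbert--Schmidt, but $\Delta_W = d^*d$ is $I$ minus a (rescaled) compact operator, so its essential spectrum is nonempty and its spectrum need not be discrete. The paper's $\lambda_k$ is defined directly as a Courant--Fischer infimum over $k$-dimensional subspaces and is not claimed to be an eigenvalue, and the paper's argument begins from an \emph{arbitrary} $k$-dimensional subspace $S$ with \emph{any} orthonormal basis $\{f_1,\ldots,f_k\}$. Your step-one normalization also drops the degree weight: the correct identity is $\int_I \lVert F(x)\rVert^2\, d_W(x)\,\mathrm{d}x = k$ (with respect to $\nu$), not $\int_0^1\lVert F(x)\rVert^2\,\mathrm{d}x = k$; this matters because the Rayleigh quotients are $\nu$-normalized throughout.

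Second, in the lower bound you claim the Rayleigh quotient of $1_{S_i}$ ``equals twice the isoperimetric ratio,'' and deduce the factor $2$ from that. In fact $R_{\Delta_W}(1_A) = h_W(A)$ exactly (\cref{lem:rayleigh-indicator}). The factor $2$ in $\lambda_k \le 2h_W(k)$ arises elsewhere: for a general $f = \sum_i f_i$ in the span, with $f_i$ disjointly supported, the cross-terms give only $(f(x)-f(y))^2 \le 2\sum_i (f_i(x)-f_i(y))^2$, and this inequality is the source of the $2$ (\cref{lem:buser}). Your account of the mechanism is therefore incorrect, even though the conclusion is.

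Third, you attribute the exponent $3.5$ (rather than the $2$ of Lee--Oveis Gharan--Trevisan) to the factor-of-$4$ loss in the Khetan--Mj inequality. That is not where the loss comes from. The paper's \cref{lem:supp-subset-small-exp} achieves the same $\sqrt{2}$ constant as the finite-graph Cheeger sweep; the Khetan--Mj constants play no role in the upper bound. The larger exponent comes entirely from the random \emph{grid} decomposition of $\mathbb{R}^k$ (cubes of side $s=1/(\sqrt{5}k)$ with cores shrunk by $s/(8k^2)$), which yields a separation $\delta \sim 1/k^3$ in \cref{prop:small-separated}, hence a factor $1/\delta^2 \cdot k \sim k^7$ in \cref{prop:dis-supp-O(k7)}, and $\sqrt{k^7}=k^{3.5}$ after the sweep. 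The $O(k^2)$ of \cite{Higher-Cheeger} relies on a more delicate padded random partition with much better separation-to-diameter ratio; the paper deliberately uses the simpler construction following Trevisan's notes and accepts the larger exponent.
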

Here $h_W(k)$ denotes the $k$-way expansion constant of the graphon $W$,
which is an analog of the $k$-way expansion constant for graphs (as in \cite{Higher-Cheeger}) 
and of the $k$-th isoperimetric constant for Markov operators 
(as in\cites{Wang-MarkovSpectralGap14,Miclo-MarkovSpectrum15}), 
and $\lambda_k$ is an analog of the $k$-th smallest eigenvalue of the Laplacian of a graph.
Similar analogs of the Laplacian eigenvalues in case of infinite dimensional Hilbert spaces
have been studied by Wang \cite{Wang-MarkovSpectralGap14} and 
Miclo \cite{Miclo-MarkovSpectrum15}.

Miclo obtained the higher-order Cheeger inequalities for self-adjoint Markov operators 
\cite[Proposition 5]{Miclo-MarkovSpectrum15} 
as a consequence of \cite[Theorem 3.8]{Higher-Cheeger} 
and an approximation procedure \cite[Section 3]{Miclo-MarkovSpectrum15}.
Wang provided a more detailed argument in the proof of 
\cite[Lemma 2.2]{Wang-MarkovSpectralGap14}.
Using \cite[Proposition 5]{Miclo-MarkovSpectrum15}, it follows that 
for any connected graphon $W$, the inequality 
\[
	\frac{\lambda_k}{2} \leq h_W(k) \leq O(k^{4}) \sqrt{\lambda_k} 
\]
holds, for every positive integer $k$.
Our proof of \cref{thm:HOCI-graphons} is motivated by the argument 
used in the proof of \cite[Theorem 1.5]{Higher-Cheeger},
and by the technique used by Trevisan \cite{Trevisan-notes-expanders} to prove 
the higher-order Cheeger inequalities for graphs 
with weaker upper bounds for the expansion constants.

\subsection{Outline of the proof}

Henceforth, in this section, a connected graphon $W$ and 
a positive integer $k$ are fixed, and $I$ denotes the interval $[0,1]$.
We provide the definition of the Laplacian of a graphon in \cref{section:prelim}, 
and the definitions of the quantities $h_W(k)$ and $\lambda_k$ 
are given in \cref{section:HOCI-graphons}. 
Now we outline the proof of \cref{thm:HOCI-graphons}. 
We refer to \cref{section:HOBuser-graphons} and 
\cref{section:HOCheeger-graphons} for the details. 

In \cref{section:HOBuser-graphons}, 
we obtain the lower bound on $h_W(k)$ stated in \cref{thm:HOCI-graphons}.
For this, we observe that the indicator functions of 
any $k$ pairwise disjoint subsets of $I$ each having positive measure, 
form a $k$-dimensional subspace of the $L^2$ space.
Using the fact that these indicator functions are disjointly supported, 
we show that the Rayleigh quotients of elements of this subspace 
with respect to the Laplacian of $W$ are \emph{small enough}.

The desired upper bound on $h_W(k)$ is established in \cref{section:HOCheeger-graphons}.
\cref{lem:supp-subset-small-exp} shows that the support of every nonzero function in $L^2$
contains a set having positive measure and \emph{small} expansion as compared to
the Rayleigh quotient of that function with respect to the Laplacian of $W$.
Let $f_1, \dots, f_k$ be orthonormal functions in $L^2$, and write $F = (f_1, \dots, f_k)$. 
To find $k$ pairwise disjoint subsets of $I$ 
having positive measures and \emph{small} expansions, 
it suffices to find $k$ disjointly supported functions in $L^2$ which have 
\emph{small} Rayleigh quotients as compared to the Rayleigh quotients of $f_1, \dots, f_k$.
It has been shown in \cref{prop:dis-supp-O(k7)} that the ``smoothened'' indicator functions
of $k$ measurable subsets of $I$ which have \emph{reasonably large} mass and 
which are ``well-separated'' (according to the \emph{radial projection distance} 
defined in \cref{section:HOCheeger-graphons}) from each other, 
scaled by $\lVert F \rVert$, serve the purpose.
The existence of such subsets of $I$ is shown in \cref{lem:large-separated}.
They are obtained by merging together, measurable subsets of $I$ which are ``well-separated'',
which have \emph{large} total mass, but each of which has \emph{small} mass. 
To construct such sets, first we prove in \cref{lem:wellspread} that 
the function $F$ is ``spread'' across all directions in $\mathbb{R}^k$.
In \cref{prop:small-separated}, we show that there is a random partition of $\mathbb{R}^k$ 
having \emph{clusters} of \emph{small} diameters such that 
the inverse images of the intersections of their \emph{cores} with the unit sphere, 
under the \emph{radial projection} of $F$ onto the unit sphere, 
are ``well-separated'' and have \emph{large} total mass. 
Further, their \emph{small} diameters guarantee that each of them has \emph{small} mass.

In \cite{Higher-Cheeger}, the higher-order Cheeger inequalities are established for graphs. 
A weaker version of these inequalities is established by Trevisan 
using a simpler argument in \cite{Trevisan-notes-expanders}.
The proof of \cref{thm:HOCI-graphons} is motivated by the arguments 
used in proving \cite[Theorem 1.5]{Higher-Cheeger} and 
in \cite[Chapters 7, 8]{Trevisan-notes-expanders}.

\section{Preliminaries} \label{section:prelim}

A \emph{graphon} $W$ is a Lebesgue measurable function $W \colon [0,1]^2 \to [0,1]$ 
such that $W(x,y) = W(y,x)$ holds for all $x,y \in [0,1]$. 
Henceforth, we will denote the interval $[0,1]$ by $I$, 
and the Lebesgue measure on $I$ by $\mu_L$. 
In the following, a measurable subset (resp. measurable function) means 
a Lebesgue measurable subset (resp. Lebesgue measurable function). 
Given a graphon $W$, the measures $\nu$ and $\eta$ on the $\sigma$-algebras 
consisting of the Lebesgue measurable subsets of $I$ and 
$E \coloneq \{(x,y) \in I^2 \mid y > x\}$, respectively, are defined by 
\[
	\nu(A) = \int_{A \times I} W \quad \text{and} \quad \eta(S) = \int_S W, 
\]	 
for every measurable subset $A$ of $I$ and measurable subset $S$ of $E$. 
Denote the inner products on the Hilbert spaces $L^2(I,\nu)$ and $L^2(E,\eta)$ by
$\langle \cdot,\cdot \rangle_v$ and $\langle \cdot,\cdot \rangle_e$, respectively, 
and the corresponding norms by $\lVert \cdot \rVert_v$ and 
$\lVert \cdot \rVert_e$, respectively. 
Given any $f,g \in L^2(I,\nu)$, and $\phi, \psi \in L^2(E,\eta)$, we have
\[
	\langle f,g\rangle_v 
	= \int_{I} \int_{I} f(x)g(x)W(x,y)\, \mathrm{d}y\, \mathrm{d}x,
\]
and
\[
	\langle \phi,\psi\rangle_e 
	= \int_{0}^{1} \int_{x}^{1} \phi(x,y)\psi(x,y)W(x,y)\, \mathrm{d}y\, \mathrm{d}x.
\]
Further, for any measurable subset $S$ of $I^2$, we define
\[
	\eta(S) = \int_{S} W(x,y)\, \mathrm{d}x\, \mathrm{d}y.
\]
Given any point $x \in I$, the \emph{degree} of $x$ is denoted by $d_W(x)$, 
and is defined as $d_W(x) = \int_{I} W(x,y)\, \mathrm{d}y$.

Khetan and Mj \cite{Abhishek-Mahan24} proved that the map 
$d \colon L^2(I,\nu) \to L^2(E,\eta)$, defined by $df(x,y) = f(y) - f(x)$, 
for every $f \in L^2(I,\nu)$ and $(x,y) \in E$, is a bounded linear operator. 
The bounded, self-adjoint, positive semidefinite linear operator $d^*d$ on $L^2(I,\nu)$ 
is called the \emph{Laplacian} of the graphon $W$, which is denoted by $\Delta_W$. 
Define the linear operator $T_W \colon L^2(I,\nu) \to L^2(I,\nu)$ as 
\[
	(T_W f)(x) = \int_{I} W(x,y)f(y)\, \mathrm{d}y,
\] 
for each $f \in L^2(I,\nu)$ and $x \in I$. Then, we have
\[
	(\Delta_W f)(x) = f(x) - \frac{1}{d_W(x)} (T_Wf)(x),
\] 
for all $f \in L^2(I,\nu)$ and $x \in I$ with positive degree.
If $W$ is a \emph{connected} graphon, that is, 
for any measurable subset $A$ of $I$ with $0 < \mu_L(A) < 1$, 
the integral $\int_{A \times A^{c}} W$ is positive, 
then the function $d_W$ is positive $\mu_L$-a.e.~on $I$.  

Let $W$ be a graphon, and $f$ be a nonzero element of $L^2(I,\nu)$. 
The \emph{Rayleigh quotient} $R_{\Delta_W}(f)$ of $f$ 
with respect to the Laplacian $\Delta_W$ of $W$ is defined as
\[
	R_{\Delta_W}(f) = \frac{\langle \Delta_W f,f\rangle_v}{\langle f,f\rangle_v}.
\] 
Since $W$ is symmetric, it follows that
\begin{equation} \label{eq:num-of-R_DeltaW}
	\langle \Delta_W f,f\rangle_v = \lVert df \rVert_e^2 
	= \frac{1}{2} \int_{I} \int_{I} (f(x) - f(y))^2 W(x,y)\, \mathrm{d}x\, \mathrm{d}y
\end{equation}
holds (see the proof of Lemma 2.2 in \cite{Mugdha-duaCBGraphons25} for more details). 
So, we have 
\[
	R_{\Delta_W}(f) 
	= \frac{\int_{I} \int_{I} (f(x) - f(y))^2 W(x,y)\, \mathrm{d}x\, \mathrm{d}y}
	{2 \int_{I} f(x)^2 d_W(x)\, \mathrm{d}x}.
\] 

\section{Proof of \cref{thm:HOCI-graphons}} \label{section:HOCI-graphons}

Let $k$ be a positive integer. 
Denote by $\mathcal{P}_k$, the set of all $k$-tuples of pairwise disjoint 
measurable subsets of $I$ each having positive measure.
Let $W$ be a connected graphon. 
The \emph{$k$-way expansion constant} of $W$, denoted by $h_W(k)$, is defined as
\[
	h_W(k) = \inf_{(A_1, \dots, A_k) \in \mathcal{P}_k} \max_{1 \leq i \leq k} h_W(A_i),
\] 
where for any measurable subset $A$ of $I$ having positive measure 
\[	
	h_W(A) \coloneq \frac{\eta(A \times A^c)}{\nu(A)}.
\]
Note that $h_W(1) = 0$, as $h_W(I) = 0$, and that 
$h_W(2)$ is same as the Cheeger constant defined by Khetan and Mj \cite{Abhishek-Mahan24}.

Denote the set of all $k$-dimensional subspaces of $L^2(I,\nu)$ by $\mathcal{S}_k$.
Define
\[
	\lambda_k 
	= \inf_{S \in \mathcal{S}_k} \max_{\substack{f \in S \\ \lVert f \rVert_v = 1}} 
	\left\langle \Delta_W f, f \right\rangle _v.	
\]

Observe that $\lambda_1 = 0$, 
which is the smallest eigenvalue of $\Delta_W$ with a constant eigenfunction, 
and that $\lambda_2$ is the bottom of the spectrum 
(see \cite[Section 3.3]{Abhishek-Mahan24}) of the graphon $W$.

Now we provide a proof of \cref{thm:HOCI-graphons}.

\begin{proof}[Proof of \cref{thm:HOCI-graphons}]
	The stated lower bound on $h_W(k)$ is proved in \cref{lem:buser}. 
	Let us prove the upper bound on $h_W(k)$ assuming \cref{prop:dis-supp-O(k7)}
	and \cref{lem:supp-subset-small-exp}. 
	Let $S$ be any $k$-dimensional subspace of $L^2(I,\nu)$. 
	Choose an orthonormal basis $\{f_1, \dots, f_k\}$ of $S$, and 
	disjointly supported nonzero functions $g_1, \dots, g_k \in L^2(I,\nu)$ 
 	satisfying the inequality
	\[
		R_{\Delta_W}(g_i) \leq O(k^7) \max_{1 \leq j \leq k} R_{\Delta_W}(f_j),
	\]
	for all $1 \leq i \leq k$, as guaranteed by \cref{prop:dis-supp-O(k7)}. 
	Now, for each $i$, using \cref{lem:supp-subset-small-exp}, we get
	a measurable subset $A_i$ of $I$ having positive measure such that
	$A_i$ is a subset of the support of the function $g_i$ and the inequality
	\[
		h_W(A_i) \leq \sqrt{2 R_{\Delta_W}(g_i)} 
		\leq O(k^{3.5}) \sqrt{\max_{1 \leq j \leq k} R_{\Delta_W}(f_j)}
		\leq O(k^{3.5}) \sqrt{\max_{\substack{f \in S \\ \lVert f \rVert_v = 1}} 
		\left\langle \Delta_W f, f \right\rangle _v}
	\]
	holds.
	As the functions $g_1, \dots, g_k$ are disjointly supported, we conclude that 
	$(A_1, \dots, A_k)$ is an element of $\mathcal{P}_k$. Thus, we have	
	\[
		h_W(k) \leq \max_{1 \leq i \leq k} h_W(A_i) 
		\leq  O(k^{3.5}) \sqrt{\max_{\substack{f \in S \\ \lVert f \rVert_v = 1}} 
		\left\langle \Delta_W f, f \right\rangle _v}.
	\] 
	This proves that the inequality $h_W(k) \leq O(k^{3.5}) \sqrt{\lambda_k}$ follows.
\end{proof}

\section{Lower bound on $h_W(k)$} \label{section:HOBuser-graphons}

\begin{lemma} \label{lem:rayleigh-indicator}
	Let $W$ be a connected graphon, and 
	$A$ be a measurable subset of $I$ with positive measure. 
	Then, we have $R_{\Delta_W}(1_A) = h_W(A)$,
	where $1_A$ denotes the indicator function of $A$ on $I$.
\end{lemma}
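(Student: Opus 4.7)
The plan is to compute the numerator and denominator of $R_{\Delta_W}(1_A)$ directly from the formulas given in \cref{section:prelim} and match them to $\eta(A \times A^c)$ and $\nu(A)$ respectively. Since the identity is a direct unpacking of definitions, I do not expect a serious obstacle; the only thing to be careful about is invoking symmetry of $W$ to handle the $\frac{1}{2}$ factor.

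For the numerator, I would use the identity
\[
    \langle \Delta_W 1_A, 1_A\rangle_v
    = \frac{1}{2}\int_I\int_I (1_A(x)-1_A(y))^2 W(x,y)\,\mathrm{d}x\,\mathrm{d}y
\]
from \eqref{eq:num-of-R_DeltaW}. The observation is that $(1_A(x) - 1_A(y))^2$ equals $1$ precisely when exactly one of $x, y$ lies in $A$, so it is the indicator function of $(A \times A^c) \cup (A^c \times A)$. Splitting the double integral accordingly yields
\[
    \langle \Delta_W 1_A, 1_A\rangle_v
    = \frac{1}{2}\bigl(\eta(A \times A^c) + \eta(A^c \times A)\bigr),
\]
and the symmetry $W(x,y) = W(y,x)$ gives $\eta(A \times A^c) = \eta(A^c \times A)$, so the numerator equals $\eta(A \times A^c)$.

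For the denominator, I would compute
\[
    \langle 1_A, 1_A\rangle_v
    = \int_I 1_A(x)^2\, d_W(x)\,\mathrm{d}x
    = \int_A \int_I W(x,y)\,\mathrm{d}y\,\mathrm{d}x
    = \nu(A),
\]
using Fubini and the definition $d_W(x) = \int_I W(x,y)\,\mathrm{d}y$. Note that $\nu(A) > 0$ because $W$ is connected and $\mu_L(A) > 0$ (otherwise, if $\mu_L(A) < 1$ then connectedness gives $\int_{A \times A^c} W > 0$, forcing $\nu(A) > 0$, and if $\mu_L(A) = 1$, positivity of $d_W$ a.e. again yields $\nu(A) > 0$). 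Taking the ratio gives
\[
    R_{\Delta_W}(1_A) = \frac{\eta(A \times A^c)}{\nu(A)} = h_W(A),
\]
as required.
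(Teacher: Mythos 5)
Your proof is correct and follows essentially the same approach as the paper's: both compute the numerator via \eqref{eq:num-of-R_DeltaW} and invoke the symmetry of $W$ to collapse the $\frac{1}{2}$ factor, differing only in whether $(1_A(x)-1_A(y))^2$ is recognized directly as the indicator of $(A\times A^c)\cup(A^c\times A)$ or expanded algebraically into $1_A(x)1_{A^c}(y)+1_A(y)1_{A^c}(x)$. The denominator computation is identical, and your added remark on $\nu(A)>0$ is a harmless extra.
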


\begin{proof}
	Observe that 
	\begin{align*}
		\langle \Delta_W 1_A,1_A\rangle_v
		& =
		\frac{1}{2} \int_{I} \int_{I} (1_A(x) - 1_A(y))^2 W(x,y)\, \mathrm{d}x\, \mathrm{d}y
		\tag{using \cref{eq:num-of-R_DeltaW}} 
		\\
		& =
		\frac{1}{2} \int_{I} \int_{I} (1_A(x) + 1_A(y) - 2 \cdot 1_A(x)1_A(y)) W(x,y)\, 
		\mathrm{d}x\, \mathrm{d}y
		\\
		& =
		\frac{1}{2} \int_{I} \int_{I} (1_A(x)(1 - 1_A(y)) + 1_A(y)(1 - 1_A(x))) W(x,y)\, 
		\mathrm{d}x\, \mathrm{d}y
		\\
		& =
		\frac{1}{2} \int_{I} \int_{I} (1_A(x)1_{A^c}(y) + 1_A(y)1_{A^c}(x)) W(x,y)\, 
		\mathrm{d}x\, \mathrm{d}y
		\\
		& =
		\int_{I} \int_{I} 1_A(x)1_{A^c}(y) W(x,y)\, \mathrm{d}x\, \mathrm{d}y
		\\
		& =
		\eta(A \times A^c),
	\end{align*}
	and that
	\[
		\langle 1_A,1_A\rangle_v 
		= \int_{I} \int_{I} 1_A(x) W(x,y)\, \mathrm{d}y\, \mathrm{d}x
		= \int_{A} \int_{I} W(x,y)\, \mathrm{d}y\, \mathrm{d}x
		= \nu(A).
	\] 
\end{proof}

\begin{lemma} \label{lem:buser}
	Let W be a connected graphon, and $k$ be a positive integer.
	Then, we have $\lambda_k \leq 2 h_W(k)$.
\end{lemma}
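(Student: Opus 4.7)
The plan is to exhibit a concrete $k$-dimensional subspace of $L^2(I,\nu)$ on which the Rayleigh quotient is controlled by $h_W(k)$. Given a tuple $(A_1,\dots,A_k) \in \mathcal{P}_k$, the natural candidate is $S = \mathrm{span}(1_{A_1},\dots,1_{A_k})$, which is genuinely $k$-dimensional because the indicator functions are nonzero and have pairwise disjoint supports. By disjointness, any $f = \sum_{i=1}^k c_i 1_{A_i} \in S$ satisfies $\lVert f \rVert_v^2 = \sum_{i=1}^k c_i^2\, \nu(A_i)$.

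To bound $\langle \Delta_W f, f\rangle_v$, I would set $A_0 = I \setminus \bigcup_{i=1}^k A_i$ and $c_0 = 0$, so that $\{A_0,A_1,\dots,A_k\}$ partitions $I$ up to a null set and $f = \sum_{i=0}^k c_i 1_{A_i}$ $\mu_L$-almost everywhere. Using \cref{eq:num-of-R_DeltaW} and partitioning $I^2$ according to this decomposition,
\[
	\langle \Delta_W f, f\rangle_v
	= \frac{1}{2} \sum_{0 \leq i, j \leq k} (c_i - c_j)^2 \int_{A_i \times A_j} W.
\]
Applying the elementary inequality $(c_i - c_j)^2 \leq 2(c_i^2 + c_j^2)$ together with the symmetry $\int_{A_i \times A_j} W = \int_{A_j \times A_i} W$ (which makes the $c_i^2$ and $c_j^2$ contributions coincide upon swapping indices) yields
\[
	\langle \Delta_W f, f\rangle_v
	\leq 2 \sum_{i=0}^k c_i^2 \sum_{j \neq i} \int_{A_i \times A_j} W
	\leq 2 \sum_{i=0}^k c_i^2\, \eta(A_i \times A_i^c),
\]
where the second inequality uses $\bigcup_{j \neq i} A_j \subseteq A_i^c$.

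The $i = 0$ summand vanishes because $c_0 = 0$, and $\eta(A_i \times A_i^c) = h_W(A_i)\,\nu(A_i)$ by definition of $h_W(A_i)$. Hence
\[
	\langle \Delta_W f, f\rangle_v
	\leq 2 \Bigl(\max_{1 \leq i \leq k} h_W(A_i)\Bigr) \sum_{i=1}^k c_i^2\, \nu(A_i)
	= 2 \Bigl(\max_{1 \leq i \leq k} h_W(A_i)\Bigr) \lVert f \rVert_v^2.
\]
Restricting to unit vectors $f \in S$ and invoking the definition of $\lambda_k$ gives $\lambda_k \leq 2\max_{1 \leq i \leq k} h_W(A_i)$, and taking the infimum over $(A_1,\dots,A_k) \in \mathcal{P}_k$ concludes the argument. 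There is no genuine analytical obstacle here: the only care required is the bookkeeping for the ``exterior'' set $A_0$ and the use of symmetry to convert the pairwise sum $\sum_{j \neq i}\int_{A_i \times A_j}W$ into the boundary integral $\eta(A_i \times A_i^c)$. This parallels the classical indicator-function proof of the easy direction of the higher-order Cheeger inequality for finite graphs.
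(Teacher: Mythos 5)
Your proof is correct, and it targets the same $k$-dimensional test subspace $S = \mathrm{span}(1_{A_1},\dots,1_{A_k})$ as the paper, arriving at the same bound $\lambda_k \leq 2\max_i h_W(A_i)$. The bookkeeping, however, is organized differently. The paper decomposes the \emph{function} $f$ into disjointly supported pieces $f_i = c_i 1_{A_i}$ and proves a pointwise bound $(f(x)-f(y))^2 \leq 2\sum_i (f_i(x)-f_i(y))^2$, from which $\langle \Delta_W f, f\rangle_v \leq 2\sum_i \langle \Delta_W f_i, f_i\rangle_v$ follows; it then invokes the mediant inequality $\frac{\sum a_i}{\sum b_i} \leq \max_i \frac{a_i}{b_i}$ together with \cref{lem:rayleigh-indicator} (which computes $R_{\Delta_W}(1_A) = h_W(A)$). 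You instead decompose the \emph{domain} $I^2$ into the blocks $A_i \times A_j$ (with the useful device of adjoining $A_0$ with coefficient $c_0=0$ to make a genuine partition), write the Dirichlet energy explicitly as $\tfrac{1}{2}\sum_{i,j}(c_i-c_j)^2\,\eta(A_i\times A_j)$, then symmetrize after applying $(c_i-c_j)^2 \leq 2(c_i^2+c_j^2)$. This is a valid alternative: the block decomposition is slightly more explicit and makes the appearance of the cut quantities $\eta(A_i\times A_i^c)$ immediate, absorbing the content of \cref{lem:rayleigh-indicator} inline rather than as a separate step. Either way the factor $2$ enters via the same elementary inequality, and nothing is lost or gained in generality; the choice is purely one of presentation.
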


\begin{proof}
	Let $(A_1, \dots, A_k)$ be an arbitrary element of $\mathcal{P}_k$, and 
	$S$ be the subspace of $L^2(I,\nu)$ spanned by the indicator functions 
	$1_{A_i}$ of the sets $A_i$, for $1 \leq i \leq k$. 
	Note that $S$ is a $k$-dimensional subspace of $L^2(I,\nu)$. 
	Take any element $f$ of $S$ with $\lVert f \rVert_v = 1$, 
	and for every $1 \leq i \leq k$, choose $f_i$ in the span of $1_{A_i}$ such that 
	$f = \sum_{1 \leq i \leq k} f_i$. 
	Since $f_1, \dots, f_k$ are disjointly supported functions, 
	for any $x \in I$ (resp. $y \in I$), 
	there is an index $j_x$ (resp. $\ell_y)$ such that 
	$f_i (x)$ (resp. $f_i(y))$ is zero for any $i \neq j_x$ (resp. $i \neq \ell_y)$. 
	Now if $j_x = \ell_y$, then
	\[
		(f(x) - f(y))^2 = (f_{j_x}(x) - f_{\ell_y}(y))^2 
		= \sum_{i = 1}^{k} (f_i(x) - f_i(y))^2,
	\]
	and if $j_x \neq \ell_y$, then
	\[
		(f(x) - f(y))^2 = (f_{j_x}(x) - f_{\ell_y}(y))^2
		\leq 2f_{j_x}(x)^2 + 2f_{\ell_y}(y)^2 
		\leq 2 \sum_{i = 1}^{k} (f_i(x) - f_i(y))^2.
	\] 
	Thus, it follows that 
	\begin{align*}
		\langle \Delta_W f,f\rangle_v 
		& = 
		\frac{1}{2} \int_{I} \int_{I} (f(x) - f(y))^2 W(x,y)\, \mathrm{d}x\, \mathrm{d}y
		\\
		& \leq 
		\sum_{i = 1}^{k} \int_{I} \int_{I} (f_i(x) - f_i(y))^2 W(x,y)\, \mathrm{d}x\, \mathrm{d}y
		\\
		& = 
		2 \sum_{i = 1}^{k} \langle \Delta_W f_i,f_i\rangle_v.
	\end{align*} 
	Then, using the facts that $\lVert f \rVert_v = 1$ and that 
	$f_1, \dots, f_k$ are disjointly supported, we get
	\begin{align*}
		\langle \Delta_W f,f\rangle_v 
		& \leq 
		\frac{2 \sum_{i = 1}^{k} \langle \Delta_W f_i,f_i\rangle_v}
		{\sum_{i = 1}^{k} \langle f_i,f_i\rangle_v}
		\\ 
		& \leq 
		2 \max_{\substack{1 \leq i \leq k \\ f_i \neq 0}} 
		\frac{\langle \Delta_W f_i,f_i\rangle_v}{\langle f_i,f_i\rangle_v}
		\\
		& \leq 
		2 \max_{1 \leq i \leq k} R_{\Delta_W}(1_{A_i})
		\\
		& =
		2 \max_{1 \leq i \leq k} h_W(A_i). 
		\tag{using \cref{lem:rayleigh-indicator}}
	\end{align*} 
	Since the above inequality holds for all $f \in S$ with $\lVert f \rVert_v = 1$, we have 
	\[
		\max_{\substack{f \in S \\ \lVert f \rVert_v = 1}} 
		\left\langle \Delta_W f, f \right\rangle _v 
		\leq 2 \max_{1 \leq i \leq k} h_W(A_i),
	\]
	and hence, $\lambda_k \leq 2 \max_{1 \leq i \leq k} h_W(A_i)$.
	The desired inequality follows. 
\end{proof}

\section{Upper bound on $h_W(k)$} \label{section:HOCheeger-graphons}

In this section, we prove \cref{prop:dis-supp-O(k7)} and \cref{lem:supp-subset-small-exp}
following \cite{Higher-Cheeger} and \cite{Trevisan-notes-expanders}.
Henceforth, we fix a connected graphon $W$ and a positive integer $k$.
The Euclidean inner product and the Euclidean norm on $\mathbb{R}^k$ 
is denoted by $\langle \cdot,\cdot\rangle$ and $\lVert \cdot \rVert$, respectively.
For any nonempty subset $R$ of $\mathbb{R}^k$, its diameter $\diam(R)$, is defined as 
$\diam(R) = \sup_{\mathbf{v}, \mathbf{w} \in R} \lVert \mathbf{v} - \mathbf{w} \rVert$.

Given any functions $f_1, \dots, f_k \in L^2(I,\nu)$, a function $F \colon I \to \mathbb{R}^k$ 
is defined by $F(x) = (f_1(x), \dots, f_k(x))$, for all $x \in I$.
Define a function 
$\overline{F} \colon F^{-1}(\mathbb{R}^k \setminus \{\mathbf{0}\}) \to \mathbb{R}^k$ by
\[
	\overline{F}(x) = \frac{1}{\lVert F(x) \rVert} F(x), 
\]
for any $x \in I$ with $F(x) \neq \mathbf{0}$, 
where $\mathbf{0}$ is the zero vector in $\mathbb{R}^k$. 
Further, we define a function $d_F \colon I \times I \to [0, \infty]$ as follows. 
For any $x,y \in I$, set
\[
	d_F(x,y) =
	\begin{cases}
		\left\lVert \overline{F}(x) - \overline{F}(y) \right\rVert		
		& \text{if } F(x), F(y) \neq \mathbf{0},
		\\
		0
		& \text{if } F(x) = F(y) = \mathbf{0},
		\\
		\infty
		& \text{otherwise}.
	\end{cases} 
\]
Note that $F$ is a measurable function, and $d_F$ is an extended pseudo-metric on $I$.
Given any $x \in I$ and a nonempty subset $A$ of $I$, we write $d_F(x,A)$ 
to denote the infimum of the set $\{d_F(x,y) \mid y \in A\}$.

\begin{lemma} \label{lem:dist-to-set-meable}
	Let $A$ be a nonempty subset of $I$. 
	Then, the function $f_A \colon I \to [0,\infty]$, defined by $f_A(x) = d_F(x,A)$, 
	for any $x \in I$, is measurable.
\end{lemma}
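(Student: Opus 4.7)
The plan is to reduce the computation of $f_A(x) = d_F(x,A)$ to a Euclidean distance-to-set computation in $\mathbb{R}^k$ composed with the measurable function $\overline{F}$, treating the zero set of $F$ separately. Let $Z \coloneq F^{-1}(\{\mathbf{0}\}) = \bigcap_{i=1}^{k} f_i^{-1}(\{0\})$, which is measurable since each $f_i$ is. On $I \setminus Z$ the scalar function $\lVert F \rVert$ is measurable and positive, so $\overline{F} = F / \lVert F \rVert$ is a measurable $\mathbb{R}^k$-valued function on $I \setminus Z$.

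Next I would unfold the three-branch definition of $d_F$ to express $f_A$ explicitly. On the set $Z$, every $y \in Z$ contributes $d_F(x,y) = 0$ and every $y \notin Z$ contributes $\infty$, so $f_A$ is constantly $0$ on $Z$ when $A \cap Z \neq \emptyset$ and constantly $\infty$ otherwise; in either case it is measurable on $Z$. On $I \setminus Z$ the situation depends on $A \setminus Z$: if $A \subseteq Z$ then $d_F(x,y) = \infty$ for every $y \in A$, giving $f_A \equiv \infty$ there; otherwise, since the values $\lVert \overline{F}(x) - \overline{F}(y) \rVert$ for $y \in A \setminus Z$ are bounded above by $2$ and the contributions from $y \in A \cap Z$ are $\infty$, we obtain
\[
    f_A(x) = \inf_{y \in A \setminus Z} \lVert \overline{F}(x) - \overline{F}(y) \rVert = \mathrm{dist}\!\left(\overline{F}(x),\, B\right),
\]
where $B \coloneq \overline{F}(A \setminus Z) \subseteq \mathbb{R}^k$ and $\mathrm{dist}(\cdot, B)$ denotes the Euclidean distance from a point to the set $B$.

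Finally, the map $\mathbf{v} \mapsto \mathrm{dist}(\mathbf{v}, B)$ is $1$-Lipschitz on $\mathbb{R}^k$ and hence Borel measurable, so composing with the measurable function $\overline{F}$ shows that $f_A$ is measurable on $I \setminus Z$. Combining this with the constant behaviour on $Z$ yields measurability on all of $I$. The argument is essentially bookkeeping; the only mildly delicate points are the extended-real values of $d_F$ (handled by noting that the supremum of the finite branch is bounded by $2$, so the $\infty$ branch never affects the infimum when a finite branch is available) and the correct treatment of the zero set $Z$. I do not expect a substantive obstacle.
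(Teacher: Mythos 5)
Your proof is correct, and it takes a slightly different and arguably cleaner route than the paper. The paper first shows that $f_A$ is $1$-Lipschitz for the pseudo-metric $d_F$ (hence continuous on $S = f_A^{-1}([0,\infty))$), and then separately proves that open sets of the pseudo-metric space $(S, d_F)$ are Lebesgue measurable, by decomposing each $d_F$-ball into $F^{-1}(\mathbf 0)$ and $\overline{F}$-preimages of Euclidean balls. You instead bypass the pseudo-metric topology entirely by writing an explicit formula $f_A = \mathrm{dist}(\cdot, B) \circ \overline{F}$ on $I \setminus Z$ (with $B = \overline{F}(A \setminus Z)$) and on $Z$ noting that $f_A$ is a constant; the measurability then drops out immediately from the facts that $\mathrm{dist}(\cdot, B)$ is $1$-Lipschitz on $\mathbb{R}^k$ (hence Borel, even though $B$ need not be measurable) and $\overline{F}$ is measurable. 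Both arguments ultimately exploit the same Lipschitz phenomenon, but your factorization sidesteps the somewhat delicate point of arguing that arbitrary (not a priori countable) unions of $d_F$-balls are measurable, which the paper handles by rewriting an open set $V$ as $\overline{F}^{-1}$ of a single open subset of $\mathbb{R}^k$. Your version is shorter and avoids introducing any auxiliary topology on $I$.
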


\begin{proof}
	Note that 
	\[
		f_A^{-1}(\infty) = 
		\begin{cases}
			F^{-1}(\mathbb{R}^k \setminus \{\mathbf{0}\}) 
			& \text{if } F(x) = \mathbf{0} \text{ for all } x \in A,
			\\ 
			F^{-1}(\mathbf{0})
			& \text{if } F(x) \neq \mathbf{0} \text{ for all } x \in A,
			\\
			\emptyset
			& \text{otherwise}.
		\end{cases}
	\] 
	In each of the above cases, $f_A^{-1}(\infty)$ is a measurable subset of $I$,
	since the function $F$ is measurable. 
	Hence, it suffices to show that the function $\restr{f_A}{S} \colon S \to [0,\infty)$ 
	is measurable, where $S = f_A^{-1}([0,\infty))$. 
	Since the inequality $\lvert d_F(x,A) - d_F(y,A) \rvert \leq d_F(x,y)$ holds
	for all $x,y \in S$, the function $\restr{f_A}{S}$ is continuous, 
	with the sets $S$ and $[0,\infty)$ equipped with the pseudo-metric 
	$\restr{d_F}{S \times S}$ and the usual metric, respectively. 
	Thus, it is enough to prove that open sets in $(S, \restr{d_F}{S \times S})$ 
	are measurable subsets of $S$.
	Henceforth, by abuse of notation, we denote $\restr{f_A}{S}$ by $f_A$,
	and $\restr{d_F}{S \times S}$ by $d_F$.

	Let $x$ be an element of $S$, and $\varepsilon$ be a positive real number. 
	Let $B(x,\varepsilon)$ denote the open ball in $S$ centered at $x$ with radius $\varepsilon$. 
	Observe that
   	\[
   		B(x,\varepsilon) =
		\begin{cases}
			F^{-1}(\mathbf{0}) \cap S
			& \text{if } F(x) = \mathbf{0},
			\\
			\overline{F}^{-1}(B_{\mathbb{R}^k}(\overline{F}(x),\varepsilon)) \cap S
			& \text{if } F(x) \neq \mathbf{0},
		\end{cases}
   	\]
	where $B_{\mathbb{R}^k}(\overline{F}(x),\varepsilon)$ denotes 
	the open ball in $\mathbb{R}^k$ centered at $\overline{F}(x)$ having radius $\varepsilon$.
	Moreover, if $F(x) = \mathbf{0}$, then we have $S = F^{-1}(\mathbf{0})$ or $S = I$, 
	and hence, 
	\[
		B(x,\varepsilon) = F^{-1}(\mathbf{0}) \cap S = F^{-1}(\mathbf{0}).
	\]	 
	If $F(x) \neq \mathbf{0}$, then 
	$S = F^{-1}(\mathbb{R}^k \setminus \{\mathbf{0}\})$ or $S = I$. 
	Since the domain of the function $\overline{F}$ is 
	$F^{-1}(\mathbb{R}^k \setminus \{\mathbf{0}\})$, 
	in either of the cases, we get
	\[
		B(x,\varepsilon) 
		= \overline{F}^{-1}(B_{\mathbb{R}^k}(\overline{F}(x),\varepsilon)) \cap S 
		= \overline{F}^{-1}(B_{\mathbb{R}^k}(\overline{F}(x),\varepsilon)). 
	\]	
	Let $V$ be a nonempty open set in $(S,d_F)$. 
	Then, for every $x \in V$, there is a real number $\varepsilon_x > 0$ such that 
	$B(x,\varepsilon_x) \subseteq V$, and so $V = \cup_{x \in V} B(x,\varepsilon_x)$. 
	Now, if $F(x) = \mathbf{0}$ for all $x \in V$, then $V = F^{-1}(\mathbf{0})$, 
	which is measurable.
	If $F(x) \neq \mathbf{0}$ for all $x \in V$, then 
	\[
		V = \bigcup_{x \in V} 
		\overline{F}^{-1}(B_{\mathbb{R}^k}(\overline{F}(x),\varepsilon_x))
		= \overline{F}^{-1} \left( \bigcup_{x \in V} 
		B_{\mathbb{R}^k}(\overline{F}(x),\varepsilon_x) \right),
	\] 
	which is measurable, being the inverse image of an open set in $\mathbb{R}^k$ 
	under the measurable function $\overline{F}$. 
	Otherwise, we have 
	 \[
		V = F^{-1}(\mathbf{0}) \cup \overline{F}^{-1} 
		\left( \bigcup_{x \in V \cap F^{-1}(\mathbb{R}^k \setminus \{\mathbf{0}\})} 
		B_{\mathbb{R}^k}(\overline{F}(x),\varepsilon_x) \right),
	\] 
	which is also measurable.
\end{proof}

\begin{lemma} \label{lem:wellspread}
	Given any orthonormal subset $\{f_1, \dots, f_k\}$ of $L^2(I,\nu)$ 
	and a unit vector $\mathbf{v} \in \mathbb{R}^k$, we have
	\[
		\int_{I} \langle F(x),\mathbf{v} \rangle^2 d_W(x)\, \mathrm{d}x = 1.
	\] 
\end{lemma}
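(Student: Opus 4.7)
The proof is essentially a one-line computation once we unpack the definitions, so my plan is just to expand and apply orthonormality.

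First I would expand the integrand pointwise as $\langle F(x), \mathbf{v}\rangle = \sum_{i=1}^k v_i f_i(x)$, which gives
\[
\int_I \langle F(x), \mathbf{v}\rangle^2 \, d_W(x)\, \mathrm{d}x
= \int_I \left(\sum_{i=1}^k v_i f_i(x)\right)\left(\sum_{j=1}^k v_j f_j(x)\right) d_W(x)\, \mathrm{d}x.
\]
Next I would recognize the weighting $d_W(x)\, \mathrm{d}x$ as the one built into $\langle \cdot, \cdot\rangle_v$: indeed, since $d_W(x) = \int_I W(x,y)\, \mathrm{d}y$, the formula for the inner product on $L^2(I,\nu)$ in \cref{section:prelim} rewrites as $\langle f, g\rangle_v = \int_I f(x)g(x) d_W(x)\, \mathrm{d}x$ for $f, g \in L^2(I,\nu)$.

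Then I would pull the sum outside the integral (by linearity and Fubini, justified by the fact that each $f_i$ lies in $L^2(I,\nu)$ so each product $f_i f_j$ is integrable against $d_W(x)\, \mathrm{d}x$) to get
\[
\sum_{i,j=1}^k v_i v_j \langle f_i, f_j\rangle_v = \sum_{i,j=1}^k v_i v_j \delta_{ij} = \sum_{i=1}^k v_i^2 = \lVert \mathbf{v}\rVert^2 = 1,
\]
using the orthonormality of $\{f_1, \dots, f_k\}$ and the hypothesis that $\mathbf{v}$ is a unit vector.

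There is no real obstacle here; the only subtle point is noticing that the measure $d_W(x)\, \mathrm{d}x$ on $I$ is exactly the pushforward encoded in $\nu$, so that the $\mathbf{v}$-directional ``mass'' of $F$ in $L^2(I, d_W(x)\, \mathrm{d}x)$ coincides with the Hilbert-space norm of a unit linear combination of orthonormal vectors.
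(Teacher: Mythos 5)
Your proof is correct and reaches the conclusion by the same underlying computation as the paper: expand the square bilinearly, identify $d_W(x)\,\mathrm{d}x$ with the inner product $\langle\cdot,\cdot\rangle_v$, and invoke orthonormality of $\{f_1,\dots,f_k\}$ together with $\lVert\mathbf{v}\rVert=1$. The paper merely packages this in operator language, defining $U\mathbf{v} = \langle F(\cdot),\mathbf{v}\rangle$ and showing $U^*U=\mathrm{Id}_{\mathbb{R}^k}$, but the key step $\langle U\mathbf{v},U\mathbf{v}\rangle_v=\sum_{i,j}v_iv_j\langle f_i,f_j\rangle_v$ is exactly your expansion, so the two arguments are the same in substance.
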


\begin{proof}
	Consider the map $U \colon \mathbb{R}^k \to L^2(I,\nu)$ defined by 
	$(U \mathbf{v})(x) = \langle F(x),\mathbf{v}\rangle$, 
	for any $\mathbf{v} \in \mathbb{R}^k$ and $x \in I$. 
	This is well-defined, since for every $\mathbf{v} = (v_1, \dots, v_k) \in \mathbb{R}^k$, 
	the function $U \mathbf{v} \colon I \to  \mathbb{R}$ is measurable, and we have
	\begin{align*}
		\langle U \mathbf{v},U \mathbf{v}\rangle_v 
		& = 
		\int_{I} \langle F(x),\mathbf{v} \rangle^2 d_W(x)\, \mathrm{d}x
		\\
		& \leq 
		\lVert \mathbf{v} \rVert^2 \int_{I} \lVert F(x) \rVert^2 d_W(x)\, \mathrm{d}x
		\\
		& =
		\lVert \mathbf{v} \rVert^2 
		\int_{I} \left( \sum_{i=1}^{k} f_i(x)^2 \right) d_W(x)\, \mathrm{d}x
		\\
		& =
		\lVert \mathbf{v} \rVert^2 \sum_{i=1}^{k} \lVert f_i \rVert_v^2
		\\
		& < 
		\infty.
	\end{align*} 
	Note that $U$ is a bounded linear transformation, 
	and the adjoint $U^* \colon L^2(I,\nu) \to \mathbb{R}^k$ of $U$ is given by 
	$U^* f = (\langle f_1,f\rangle_v, \dots, \langle f_k,f\rangle_v)$, 
	for every $f \in L^2(I,\nu)$. 
	Let $\{e_1, \dots, e_k\}$ denote the standard basis of $\mathbb{R}^k$. 
	As the subset $\{f_1, \dots, f_k\}$ of $L^2(I,\nu)$ is orthonormal, 
	we get $U^* Ue_i = U^* f_i = e_i$, for all $1 \leq i \leq k$, and hence, 
	$U^* U$ is the identity operator on $\mathbb{R}^k$. 
	Therefore, for any unit vector $\mathbf{v} \in \mathbb{R}^k$, we obtain
	\[
		\int_{I} \langle F(x),\mathbf{v} \rangle^2 d_W(x)\, \mathrm{d}x 
		= \langle U \mathbf{v},U \mathbf{v}\rangle_v
		= \langle U^* U \mathbf{v},\mathbf{v}\rangle 
		= \langle \mathbf{v},\mathbf{v}\rangle = 1.
	\]
\end{proof}

Let $A$ and $B$ be subsets of $\mathbb{R}$, and $t$ be a real number. 
The sets $A + B$ and $tA$ are defined as
\[
	A + B = \{a + b \mid a \in A, b \in B\} \quad \text{and} \quad
	tA = \{ta \mid a \in A\}.
\] 
We denote the sets $A + \{x\}$ and $A + \{-x\}$ by $A + x$ and $A - x$, respectively.
Henceforth, $\mathcal{B}$ denotes the Borel $\sigma$-algebra on $\mathbb{R}$.

\begin{lemma} \label{lem:fit-in-0tos}
	Let $(\mathbb{R},\mathcal{B},\mu)$ be a measure space, 
	and suppose that $\mu$ is a translation invariant measure.
	Let $s$ be a positive real number, and $A \in \mathcal{B}$ be such that 
	no two distinct elements of $A$ are congruent modulo $s\mathbb{Z}$.
	Then, there exists a Borel measurable subset $B$ of $[0,s)$ satisfying
	$A + s\mathbb{Z} = B + s\mathbb{Z}$ and $\mu(B) = \mu(A)$.
\end{lemma}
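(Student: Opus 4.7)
The plan is to slice $A$ along the intervals $[ns,(n+1)s)$ for $n\in\mathbb{Z}$, translate each slice down into $[0,s)$, and take $B$ to be the union of the translated slices. The hypothesis on $A$ will guarantee that the translated slices are pairwise disjoint, so measure adds up correctly.

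Concretely, I would first set $A_n = A \cap [ns,(n+1)s)$ for each $n \in \mathbb{Z}$, so that $(A_n)_{n\in\mathbb{Z}}$ is a countable Borel partition of $A$. Then define $B_n = A_n - ns \subseteq [0,s)$; each $B_n$ is Borel since $\mathcal{B}$ is translation invariant, and $B \coloneq \bigcup_{n\in\mathbb{Z}} B_n$ is a Borel subset of $[0,s)$.

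Next I would verify the two required properties. For the coset equality, note that $A_n + s\mathbb{Z} = B_n + ns + s\mathbb{Z} = B_n + s\mathbb{Z}$, and taking the union over $n$ gives $A + s\mathbb{Z} = B + s\mathbb{Z}$. For the measure equality, the key point is that the $B_n$ are pairwise disjoint: if $b \in B_{n_1} \cap B_{n_2}$ with $n_1 \ne n_2$, then $b + n_1 s$ and $b + n_2 s$ would be two distinct elements of $A$ that are congruent modulo $s\mathbb{Z}$, contradicting the hypothesis. By translation invariance, $\mu(B_n) = \mu(A_n)$, so countable additivity gives
\[
\mu(B) = \sum_{n \in \mathbb{Z}} \mu(B_n) = \sum_{n \in \mathbb{Z}} \mu(A_n) = \mu(A),
\]
where the last equality uses that the $A_n$ are already disjoint by construction.

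There is essentially no hard step here; the only thing to be careful about is making sure every set in sight is Borel (which follows from Borel-measurability of translation and the fact that the slicing involves only countably many half-open intervals) and that the hypothesis is invoked exactly where disjointness of the translated slices is needed. The statement would fail without that hypothesis, since otherwise two points of $A$ lying in different intervals $[n_1 s,(n_1+1)s)$ and $[n_2 s,(n_2+1)s)$ could collapse to the same point of $[0,s)$ after translation, forcing $\mu(B) < \mu(A)$.
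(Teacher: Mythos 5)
Your proof is correct and takes essentially the same approach as the paper: slice $A$ into $A_n = A \cap [ns,(n+1)s)$, translate each slice into $[0,s)$, observe that the hypothesis makes the translates disjoint, and conclude via translation invariance and countable additivity. Your write-up spells out a couple of small verification steps (the coset identity, the contradiction argument for disjointness) that the paper leaves implicit, but the decomposition and the use of the hypothesis are identical.
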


\begin{proof}
	For any integer $n$, let $A_n$ denote the set $A \cap [ns,(n+1)s)$. 
	Then the set $B \coloneq \bigcup_{n \in \mathbb{Z}} (A_n - ns)$ is 
	a Borel measurable subset of $[0,s)$ such that 
	$A + s\mathbb{Z} = B + s\mathbb{Z}$ holds.
	Further, since any two distinct elements of $A$ are not congruent modulo $s\mathbb{Z}$,
	if $m \neq n$, then the sets $A_m - ms$ and $A_n - ns$ are disjoint.
	Hence, it follows from translation invariance of the measure $\mu$ that 
	$\mu(B) = \sum_{n \in \mathbb{Z}} \mu(A_n) = \mu(A)$.
\end{proof}

Given any subset $R$ of the unit sphere $\mathbb{S}^{k-1}$ in $\mathbb{R}^k$, 
we denote the set $\overline{F}^{-1}(R)$ by $V(R)$.

\begin{prop} \label{prop:small-separated}
	Let $\{f_1, \dots, f_k\}$ be an orthonormal subset of $L^2(I,\nu)$.
	Then for some $m \geq 1$, there exist pairwise disjoint measurable subsets 
	$T_1, \dots, T_m$ of $I$ such that the following conditions hold. 
	\begin{enumerate}[(a)]	
		\item $\sum_{i=1}^{m} \int_{T_i} \lVert F(x) \rVert^2 d_W(x)\, \mathrm{d}x 
		\geq k - \frac{1}{4}$.
		\item For any $1 \leq i,j \leq k$ with $i \neq j$, 
		if $x \in T_i$ and $y \in T_j$, then  
		$d_F(x,y) \geq \frac{1}{4 \sqrt{5} k^3}$.
		\item $\int_{T_i} \lVert F(x) \rVert^2 d_W(x)\, \mathrm{d}x 
		\leq 1 + \frac{1}{4k}$ for all $1 \leq i \leq m$. 
	\end{enumerate}
\end{prop}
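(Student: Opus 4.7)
The plan is to produce the $T_i$ as the $\overline{F}$-preimages of the ``cores'' of a random axis-aligned grid partition of $\mathbb{R}^k$. Fix parameters $s > 0$ and $\varepsilon \in (0,1/2)$ (to be chosen at the end), let $\mathbf{t} \in [0,s)^k$ be a uniformly random shift, and for each $\mathbf{n} \in \mathbb{Z}^k$ define the clusters and cores by
\[
    C_\mathbf{n} = \mathbf{t} + s\mathbf{n} + [0,s)^k, \qquad
    C_\mathbf{n}^* = \mathbf{t} + s\mathbf{n} + [\varepsilon s, (1-\varepsilon)s)^k,
\]
and set $T_\mathbf{n} = \overline{F}^{-1}(C_\mathbf{n}^*)$. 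These are pairwise disjoint measurable subsets of $I$, and only finitely many are nonempty because $\overline{F}$ takes values in the compact set $\mathbb{S}^{k-1}$.

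Two of the three conclusions will hold for every value of $\mathbf{t}$ by geometry alone. For condition (b), distinct cores differ in at least one coordinate by $\geq 2\varepsilon s$, so if $x \in T_\mathbf{n}$ and $y \in T_\mathbf{m}$ with $\mathbf{n} \neq \mathbf{m}$ then $d_F(x,y) = \lVert \overline{F}(x) - \overline{F}(y)\rVert \geq 2\varepsilon s$. For condition (c), given a nonempty $T_\mathbf{n}$, pick any $\mathbf{v} \in C_\mathbf{n}^* \cap \mathbb{S}^{k-1}$; since $\diam(C_\mathbf{n}) = s\sqrt{k}$, every $x \in T_\mathbf{n}$ satisfies $\lVert \overline{F}(x) - \mathbf{v}\rVert^2 \leq s^2 k$, hence $\langle \overline{F}(x),\mathbf{v}\rangle \geq 1 - s^2k/2$ and therefore $\langle F(x),\mathbf{v}\rangle^2 \geq (1 - s^2k)\lVert F(x)\rVert^2$. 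Integrating over $T_\mathbf{n}$ and invoking \cref{lem:wellspread} bounds $\int_{T_\mathbf{n}} \lVert F\rVert^2 d_W\, dx$ by $(1 - s^2 k)^{-1}$.

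Condition (a) is established in expectation. Since $\int_I \lVert F(x)\rVert^2 d_W(x)\, dx = \sum_i \lVert f_i\rVert_v^2 = k$, Fubini gives
\[
    \mathbb{E}_\mathbf{t}\!\left[\sum_\mathbf{n} \int_{T_\mathbf{n}} \lVert F\rVert^2 d_W\, dx\right]
    = \int_I \lVert F(x)\rVert^2\, \mathbb{P}_\mathbf{t}\!\left(\overline{F}(x) \in \bigcup_\mathbf{n} C_\mathbf{n}^*\right) d_W(x)\, dx.
\]
For any fixed $\mathbf{u} \in \mathbb{S}^{k-1}$, the event $\mathbf{u} \in \bigcup_\mathbf{n} C_\mathbf{n}^*$ is the independent intersection over $1 \leq i \leq k$ of $(\mathbf{u}_i - \mathbf{t}_i) \bmod s \in [\varepsilon s,(1-\varepsilon)s)$, each having probability exactly $1-2\varepsilon$ by the translation invariance of Lebesgue measure on $[0,s)$ (this mod-$s$ reduction is formalized by \cref{lem:fit-in-0tos}). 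The expected total mass is therefore $k(1-2\varepsilon)^k$, so some realization of $\mathbf{t}$ achieves at least this.

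Choosing $\varepsilon = 1/(8k^2)$ and $s = 1/(\sqrt{5}k)$ makes the three numerical bounds come out exactly right: Bernoulli gives $(1-2\varepsilon)^k \geq 1 - 1/(4k)$, so the mass is at least $k-1/4$, meeting (a); $2\varepsilon s = 1/(4\sqrt{5}k^3)$ gives (b); and $s^2k = 1/(5k)$ makes $(1 - s^2k)^{-1} = 5k/(5k-1) \leq (4k+1)/(4k) = 1 + 1/(4k)$ for every $k \geq 1$, giving (c). Relabeling the finitely many nonempty $T_\mathbf{n}$ as $T_1,\dots,T_m$ (with $m \geq 1$ since the mass is positive) finishes the construction. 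The main obstacle is precisely this three-way balance: $\varepsilon$ must be small enough for (a) but large enough for (b), while $s$ must be small enough for (c) but not so small that (b) fails. The forced scaling $s \sim 1/k$, $\varepsilon \sim 1/k^2$ is what produces the $k^{-3}$ separation in (b), and this separation is what propagates through the rest of \cref{section:HOCheeger-graphons} to yield the final $k^{3.5}$ exponent.
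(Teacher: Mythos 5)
Your proof is correct and essentially identical to the paper's: the same randomly shifted axis-aligned grid with side $s = 1/(\sqrt{5}k)$ and core margin $\varepsilon s = s/(8k^2)$, the same use of \cref{lem:wellspread} to bound each core's mass for (c), the same cube-separation computation for (b), and the same first-moment argument (reducing the per-coordinate probability to a translated-interval measure, formalized via \cref{lem:fit-in-0tos}, then applying Bernoulli's inequality) for (a). The one point the paper treats more carefully is verifying the joint measurability in $(x,\mathbf{w})$ of $\lVert F(x)\rVert^2 1_{V(R_{\mathbf{w}})}(x)\, d_W(x)$ so that Fubini--Tonelli applies and the captured mass is a genuine random variable, a technicality your "Fubini gives" passes over.
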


\begin{proof}
	Let us put $s = \frac{1}{\sqrt{5}k}$.
	For any element $\mathbf{n} = (n_1, \dots, n_k)$ of $\mathbb{Z}^k$, let 
	\[
		C_{\mathbf{n}} \coloneq \prod_{i=1}^{k} [n_i s, n_i s + s) 
		\quad \text{and} \quad 
		\widetilde{C}_{\mathbf{n}} \coloneq 
		\prod_{i=1}^{k} \left[ n_i s + \frac{s}{8k^2}, n_i s + s - \frac{s}{8k^2} \right].	
	\]
	We denote the cube $\widetilde{C}_{\mathbf{n}} + \mathbf{w}$ by 
	$\widetilde{C}_{\mathbf{n, w}}$ for any $\mathbf{w} \in \mathbb{R}^k$.
	For any integer $n$, let
	\[
		J_n \coloneq \left[ ns + \frac{s}{8k^2}, ns + s - \frac{s}{8k^2} \right].
	\]
	Let $m$ denote the Borel measure on $\mathbb{R}$. 
	For any $A \in \mathcal{B}$, define $\mu(A) = \frac{1}{s} m(A)$. 
	Then, $\mu$ is a translation invariant measure on $\mathbb{R}$. 
	Its restriction, again denoted by $\mu$, 
	to the Borel measurable subsets of $[0,s)$ is a probability measure. 
	Consider the corresponding product probability space $\Omega = [0,s)^k$ 
	equipped with the Borel $\sigma$-algebra and the probability measure $\mathbb{P}$. 
	Note that
	\[
		\mathbb{P}(I_1 \times \dots \times I_k) 
		= \frac{m(I_1) \times \dots \times m(I_k)}{s^k},
	\] 
	for all Borel measurable subsets $I_1, \dots, I_k$ of $[0,s)$. 
	
	Now fix any $x \in I$ such that $F(x) \neq \mathbf{0}$, 
	and let $\overline{F}(x) = \mathbf{z} = (z_1, \dots, z_k)$. Define
	\[
		A_x = \left\{ \mathbf{w} \in \Omega \biggm| \overline{F}(x) \in 
		\bigcup_{\mathbf{n} \in \mathbb{Z}^k} \widetilde{C}_{\mathbf{n, w}} \right\}.
	\] 
	Given any $\mathbf{w} \in \Omega$, observe that $\mathbf{w} \in A_x$
	if and only if $\overline{F}(x) - \mathbf{w} \in 
	\bigcup_{\mathbf{n} \in \mathbb{Z}^k} \widetilde{C}_{\mathbf{n}}$, that is,
	$\mathbf{w} - \overline{F}(x) \in 
	- \bigcup_{\mathbf{n} \in \mathbb{Z}^k} \widetilde{C}_{\mathbf{n}}$.
	Using the fact that $\bigcup_{\mathbf{n} \in \mathbb{Z}^k} \widetilde{C}_{\mathbf{n}}
	= - \bigcup_{\mathbf{n} \in \mathbb{Z}^k} \widetilde{C}_{\mathbf{n}}$, we get
	\begin{align*}
		A_x 
		& = 
		\Omega \cap \left( \left( \bigcup_{\mathbf{n} \in \mathbb{Z}^k} 
		\widetilde{C}_{\mathbf{n}} \right) + \overline{F}(x) \right)
		\\
		& =
		\prod_{i=1}^{k} \left( [0,s) \cap 
		\left( \bigcup_{n \in \mathbb{Z}} (J_n + z_i) \right) \right) 
		\\
		& =
		\prod_{i=1}^{k} \left( [0,s) \cap ((J_0 + z_i) + s\mathbb{Z}) \right).
	\end{align*}
	Let $i \in \{1, \dots, k\}$ be arbitrary. 
	Since the interval $J_0 + z_i$ has length less than $s$, 
	no two of its elements are congruent modulo $s\mathbb{Z}$.
	Hence, using \cref{lem:fit-in-0tos}, choose a Borel measurable subset $B_i$ 
	of $[0,s)$ satisfying $(J_0 + z_i) + s\mathbb{Z} = B_i + s\mathbb{Z}$, and 
	\[
		\mu(B_i) = \mu(J_0 + z_i) = \mu(J_0) = s - \frac{s}{4k^2}.
	\]
	Then, note that $A_x = \prod_{i=1}^{k} B_i$, and thus, 
	$A_x$ is a Borel measurable subset of $\Omega$ with 
	$\mathbb{P}(A_x) = \left( 1 - \frac{1}{4k^2} \right)^k$. 
	Using Bernoulli's inequality, it follows that $\mathbb{P}(A_x) \geq 1 - \frac{1}{4k}$. 

	For any $\mathbf{w} \in \Omega$, let $R_{\mathbf{w}}$ denote the Borel measurable set 
	$\left( \bigcup_{\mathbf{n} \in \mathbb{Z}^k} \widetilde{C}_{\mathbf{n, w}} \right) 
	\cap \mathbb{S}^{k-1}$. 
	As $\overline{F}$ is a measurable function, the set $V(R_{\mathbf{w}})$ is measurable. 
	For every $x \in I$, define the function $X_x \colon \Omega \to \mathbb{R}$ by
	\[
		X_x(\mathbf{w}) = 
		\begin{cases}
			\lVert F(x) \rVert^2 1_{A_x}(\mathbf{w})
			& \text{if } F(x) \neq \mathbf{0},
			\\
			0
			& \text{otherwise},
		 \end{cases}
	\] 
	for any $\mathbf{w} \in \Omega$. 
	If $x$ is an element if $I$ such that $F(x) = \mathbf{0}$, 
	then $X_x$ is a random variable. 
	Also, for each $x \in I$ with $F(x) \neq \mathbf{0}$, 
	the function $X_x$ is a random variable, 
	as $A_x$ is a Borel measurable subset of $\Omega$. 
	Moreover, since for all $\mathbf{w} \in \Omega$ and $x \in I$, the equality
	$X_x(\mathbf{w}) = \lVert F(x) \rVert^2 1_{V(R_{\mathbf{w}})}(x)$ holds, 
	the function $x \mapsto X_x(\mathbf{w})$ on $I$ is measurable 
	for every $\mathbf{w} \in \Omega$. 
	For any $\mathbf{w} \in \Omega$, define
	\[
		X(\mathbf{w}) = \int_{I} X_x(\mathbf{w}) d_W(x)\, \mathrm{d}x.
	\] 
	Note that 
	\[
		X(\mathbf{w}) 
		= \int_{I} \lVert F(x) \rVert^2 1_{V(R_{\mathbf{w}})}(x) d_W(x)\, \mathrm{d}x,
	\] 
	for all $\mathbf{w} \in \Omega$. 
	We will now show that $X$ is a random variable. 
	We know that the function $(x,\mathbf{w}) \mapsto \lVert F(x) \rVert^2 d_W(x)$ 
	is measurable on $I \times \Omega$. 
	Observe that for any $(x,\mathbf{w}) \in I \times \Omega$, we have 
	$1_{V(R_{\mathbf{w}})}(x) = 1$ if and only if 
	$x \in F^{-1}(\mathbb{R}^k \setminus \{\mathbf{0}\})$ and $\overline{F}(x) \in 
	\bigcup_{\mathbf{n} \in \mathbb{Z}^k} \widetilde{C}_{\mathbf{n}} + \mathbf{w}$ hold.
	Let 
	\[
		H \colon F^{-1}(\mathbb{R}^k \setminus \{\mathbf{0}\}) \times \Omega \to \mathbb{R}^k
	\]	
	be the function defined by $H(x,\mathbf{w}) = \overline{F}(x) - \mathbf{w}$. 
	Then, it follows that $1_{V(R_{\mathbf{w}})}(x) = 1$ if and only if 
	$(x,\mathbf{w}) \in H^{-1}\!\left( \bigcup_{\mathbf{n} \in \mathbb{Z}^k} 
	\widetilde{C}_{\mathbf{n}} \right)$. 
	Since the function $(x,\mathbf{w}) \mapsto (\overline{F}(x),\mathbf{w})$ 
	is measurable on $F^{-1}(\mathbb{R}^k \setminus \{\mathbf{0}\}) \times \Omega$, 
	and the function $(\mathbf{v}_1,\mathbf{v}_2) \mapsto \mathbf{v}_1 - \mathbf{v}_2$ 
	from $\mathbb{R}^k \times \mathbb{R}^k$ to $\mathbb{R}^k$ is continuous, 
	their composition $H$ is measurable. 
	Now the fact that $\bigcup_{\mathbf{n} \in \mathbb{Z}^k} \widetilde{C}_{\mathbf{n}}$ 
	is a Borel measurable subset of $\mathbb{R}^k$ ensures that 
	the function $(x,\mathbf{w}) \mapsto 1_{V(R_{\mathbf{w}})}(x)$ 
	is measurable on $I \times \Omega$. 
	We can conclude that the function 
	$(x,\mathbf{w}) \mapsto \lVert F(x) \rVert^2 1_{V(R_{\mathbf{w}})}(x) d_W(x)$
	is measurable on $I \times \Omega$. 
	By the Fubini--Tonelli theorem, it follows that $X$ is a random variable, 
	and we have
	\begin{align*}
		\mathbb{E}[X] 
		& =
		\int_{\Omega} \left( \int_{I} X_x(\mathbf{w}) d_W(x)\, \mathrm{d}x \right) 
		\mathrm{d}\mathbb{P}(\mathbf{w}) 
		\\
		& =
		\int_{I} \left( \int_{\Omega} X_x(\mathbf{w})\, 
		\mathrm{d}\mathbb{P}(\mathbf{w}) \right) d_W(x)\, \mathrm{d}x
		\\
		& =
		\int_{F^{-1}(\mathbb{R}^k \setminus \{0\})}
		\lVert F(x) \rVert^2 \left( \int_{\Omega} 1_{A_x}(\mathbf{w})\, 
		\mathrm{d}\mathbb{P}(\mathbf{w}) \right) d_W(x)\, \mathrm{d}x
		\\
		& =
		\int_{F^{-1}(\mathbb{R}^k \setminus \{0\})}
		\lVert F(x) \rVert^2 \mathbb{P}(A_x) d_W(x)\, \mathrm{d}x
		\\
		& \geq
		\left( 1 - \frac{1}{4k} \right) \int_{I} \lVert F(x) \rVert^2 d_W(x)\, \mathrm{d}x
		\\
		& =
		\left( 1 - \frac{1}{4k} \right) k 
		\tag{using $\lVert f_i \rVert_v = 1$ for all $1 \leq i \leq k$}
		\\
		& =
		k - \frac{1}{4}.
	\end{align*}
	Choose and fix $\mathbf{w} \in \Omega$ such that $X(\mathbf{w}) \geq k - \frac{1}{4}$.
	Then, $X_x(\mathbf{w})$ is positive for some $x \in I$, and hence, 
	the set $V(R_{\mathbf{w}})$ is nonempty. 
	This implies that $R_{\mathbf{w}}$ is a nonempty set, and therefore, the set 
	\[
		\mathcal{N} = \left\{ \mathbf{n} \in \mathbb{Z}^k \Bigm| 
	 	\widetilde{C}_{\mathbf{n,w}} \cap \mathbb{S}^{k-1} \neq \emptyset \right\}
	\]
	is nonempty. Note that $\mathcal{N}$ is a finite set. 
	Write $\mathcal{N} = \{\mathbf{n}_1, \dots, \mathbf{n}_m\}$.
	For each $1 \leq i \leq m$, define 
	$R_i = \widetilde{C}_{\mathbf{n}_i,\mathbf{w}} \cap \mathbb{S}^{k-1}$,
	and $T_i = V(R_i)$. 
	Since $R_1, \dots, R_m$ are pairwise disjoint Borel measurable subsets 
	of $\mathbb{S}^{k-1}$, it follows that 
	$T_1, \dots, T_m$ are pairwise disjoint measurable subsets of $I$. 
	Further, as $V(R_{\mathbf{w}}) = \bigcup_{i=1}^{m} T_i$, we obtain 
	\begin{equation*}
		\sum_{i=1}^{m} \int_{T_i} \lVert F(x) \rVert^2 d_W(x)\, \mathrm{d}x
		= \int_{V(R_{\mathbf{w}})} \lVert F(x) \rVert^2 d_W(x)\, \mathrm{d}x
		= X(\mathbf{w})
		\geq k - \frac{1}{4}.
	\end{equation*}
	Let $1 \leq i, j \leq m$ with $i \neq j$, 
	and $x \in T_i$ and $y \in T_j$ be arbitrary. 
	Then, $F(x) \neq \mathbf{0}$ and $F(y) \neq \mathbf{0}$, and hence, 
	$d_F(x,y) = \left\lVert \overline{F}(x) - \overline{F}(y) \right\rVert$.
	Note that $\overline{F}(x)$ and $\overline{F}(y)$ belong to the cubes 
	$\widetilde{C}_{\mathbf{n}_i, \mathbf{w}}$ and $\widetilde{C}_{\mathbf{n}_j, \mathbf{w}}$,
	respectively, and these cubes are at least $\frac{s}{4k^2}$ apart. 
	Thus, we conclude that 
	\[
		d_F(x,y) \geq \frac{s}{4k^2} = \frac{1}{4\sqrt{5} k^3}.
	\]
	Now fix any $1 \leq i \leq m$, and let $\mathbf{v} \in R_i$ be arbitrary.
	As $\diam(R_i) \leq s \sqrt{k} = \frac{1}{\sqrt{5k}}$, for any $x \in T_i$,	
	we have $\left\lVert \overline{F}(x) - \mathbf{v} \right\rVert^2 \leq \frac{1}{5k}$, 
	that is, $\left\langle \overline{F}(x),\mathbf{v} \right\rangle \geq 1 - \frac{1}{10k}$. 
	Thus, the inequality 
	\[
		\langle F(x),\mathbf{v} \rangle^2 
		\geq \left( 1 - \frac{1}{10k} \right)^2 \lVert F(x) \rVert^2.
	\]
	holds for all $x \in T_i$. This implies
	 \[
		\int_{T_i} \langle F(x),\mathbf{v} \rangle^2 d_W(x)\, \mathrm{d}x
		\geq \left( 1 - \frac{1}{10k} \right)^2 
		\int_{T_i} \lVert F(x) \rVert^2 d_W(x)\, \mathrm{d}x,
	\] 
	and then, using \cref{lem:wellspread}, we get
	\[
		\int_{T_i} \lVert F(x) \rVert^2 d_W(x)\, \mathrm{d}x
		\leq \frac{1}{\left( 1 - \frac{1}{10k} \right)^2} 
		\leq \frac{1}{1 - \frac{1}{5k}}
		\leq \frac{5k}{5k-1} 
		\leq 1 + \frac{1}{5k-1} 
		\leq 1 + \frac{1}{4k},
	\]
	as desired.
\end{proof}

Given a measurable subset $A$ of $I$, the integral 
\[
	\int_{A} \lVert F(x) \rVert^2 d_W(x)\, \mathrm{d}x
\]	
is called the \emph{mass} of the set $A$, 
and given a finite set $\mathcal{A}$ of pairwise disjoint measurable subsets of $I$, 
its \emph{total mass} is defined to be the sum of the masses of its elements.

\begin{lemma} \label{lem:large-separated}
	Let $\{f_1, \dots, f_k\}$ be an orthonormal subset of $L^2(I,\nu)$.
	Then there exist pairwise disjoint measurable subsets $A_1, \dots, A_k$ of $I$ 
	such that the following conditions hold. 
	\begin{enumerate}[(a)]	
		\item $\int_{A_i} \lVert F(x) \rVert^2 d_W(x)\, \mathrm{d}x
		\geq \frac{1}{2}$ for all $1 \leq i \leq k$. 
		\item For any $1 \leq i,j \leq k$ with $i \neq j$, 
		if $x \in A_i$ and $y \in A_j$, then  
		$d_F(x,y) \geq \frac{1}{4 \sqrt{5} k^3}$.
	\end{enumerate}
\end{lemma}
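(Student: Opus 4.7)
The plan is to apply \cref{prop:small-separated} and then combinatorially merge the resulting sets $T_1,\dots,T_m$ into exactly $k$ groups each of mass at least $\tfrac{1}{2}$. Since the $T_i$'s are already pairwise $d_F$-separated by at least $\tfrac{1}{4\sqrt{5}k^3}$, condition~(b) will be automatic provided that each $A_j$ is a union of some of the $T_i$'s and distinct $A_j$'s draw from disjoint sub-collections. Thus the lemma reduces to a pure combinatorial task: given items whose individual masses are at most $1+\tfrac{1}{4k}$ (condition (c) of the proposition) and whose total mass is at least $k-\tfrac{1}{4}$ (condition (a)), produce $k$ disjoint sub-collections each of total mass at least $\tfrac{1}{2}$.

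To do this, I would split the sets by size. Call a $T_i$ \emph{rich} if its mass is at least $\tfrac{1}{2}$ and \emph{poor} otherwise, and let $r$ be the number of rich sets. If $r\ge k$, any $k$ of the rich sets serve as $A_1,\dots,A_k$. Otherwise $r<k$, and I would take $A_1,\dots,A_r$ to be the rich sets and construct the remaining $k-r$ groups from the poor sets by a greedy packing: process the poor sets one at a time (say in decreasing order of mass), maintain one open ``bin'' to which successive sets are appended, and close the bin as soon as its accumulated mass reaches $\tfrac{1}{2}$. Because each poor set has mass strictly less than $\tfrac{1}{2}$, every closed bin ends with mass in $[\tfrac{1}{2},1)$ while the final (open) bin has mass less than $\tfrac{1}{2}$.

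The main obstacle is showing that the greedy closes at least $k-r$ bins. By conditions~(a) and (c) of \cref{prop:small-separated}, the total mass of the poor sets is at least $\bigl(k-\tfrac{1}{4}\bigr)-r\bigl(1+\tfrac{1}{4k}\bigr)$. If $g$ bins are closed, this mass is strictly less than $g+\tfrac{1}{2}$, which rearranges to
\[
g>k-\tfrac{3}{4}-r-\tfrac{r}{4k}.
\]
Since $r\le k-1$, the right-hand side strictly exceeds $k-1-r$ (the gap is $\tfrac{1}{4}-\tfrac{r}{4k}>0$), so $g\ge k-r$ because $g$ is an integer. Declaring the first $k-r$ closed bins to be $A_{r+1},\dots,A_k$ then finishes the construction; condition~(b) transfers from condition~(b) of the proposition because the $A_j$'s are disjoint unions of distinct $T_i$'s. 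Everything outside of this tight algebraic count is a direct transcription from \cref{prop:small-separated}.
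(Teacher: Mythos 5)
Your proof is correct and follows essentially the same route as the paper's: reduce to \cref{prop:small-separated}, then combinatorially aggregate the $T_i$'s using the bounds $\text{mass}(T_i)\le 1+\tfrac{1}{4k}$ and total mass $\ge k-\tfrac14$ to guarantee at least $k$ groups of mass $\ge\tfrac12$. The only difference is cosmetic: the paper repeatedly merges two arbitrary small sets (mass $<\tfrac12$) until at most one remains and then takes the large members of the resulting collection, whereas you first peel off the already-large sets and greedily bin-pack the small ones; the mass-counting argument ($\le\tfrac12$ for the leftover, $<1$ for anything created by merging, $\le 1+\tfrac{1}{4k}$ for originals) is identical in both.
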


\begin{proof}
	Using \cref{prop:small-separated}, choose a positive integer $m$, 
	and pairwise disjoint measurable subsets $T_1, \dots, T_m$ of $I$ 
	satisfying the conditions $(a)$, $(b)$ and $(c)$ of \cref{prop:small-separated}. 
	We begin with the collection $\mathcal{A}_0 = \{T_1, \dots, T_m\}$ 
	and run the following process. 
	For $n \geq 1$, if the collection $\mathcal{A}_{n-1}$ contains 
	at least two elements each having mass less than $\frac{1}{2}$, 
	then pick any two of them and replace them with their union, 
	and denote the obtained collection by $\mathcal{A}_n$.
	Otherwise, stop the process.  
	As $\mathcal{A}_0$ is a finite collection, the process eventually stops, 
	say at the $r$-th step, for some $r \geq 0$. 
	Note that the collection $\mathcal{A}_r$ contains 
	at most one set having mass less than $\frac{1}{2}$, 
	and it contains at least one set having mass at least $\frac{1}{2}$, 
	since for any $i \geq 0$, the total mass of the collection $\mathcal{A}_i$ 
	is same as that of $\mathcal{A}_0$, 
	and the total mass of $\mathcal{A}_0$ is at least $\frac{3}{4}$. 
	Let $A_1, \dots, A_t$ denote the elements of $\mathcal{A}_r$, 
	each having mass at least $\frac{1}{2}$, for some $t \geq 1$.
	Observe that the mass of any set in the collection 
	$\{A_1, \dots, A_t\} \setminus \mathcal{A}_0$ is less than $1$, 
	as it is a union of two sets each having mass less than $\frac{1}{2}$.
	Also, the mass of any element of $\mathcal{A}_0$ is at most $1 + \frac{1}{4k}$.
	Hence, the total mass of $\mathcal{A}_0$ is at most 
	$\frac{1}{2} + t \left( 1 + \frac{1}{4k} \right)$. 
	Since we know that the total mass of $\mathcal{A}_0$ is at least $k - \frac{1}{4}$, 
	it follows that
	\[
		k - \frac{1}{4} \leq \frac{1}{2} + t \left( 1 + \frac{1}{4k} \right),
	\]	
	that is,
	\[
		t \geq \frac{k - \frac{3}{4}}{1 + \frac{1}{4k}},
	\] 
	and hence, $t > k - 1$. As $t$ is an integer, this implies $t \geq k$.
	Now observe that the sets $A_1, \dots, A_k$ are pairwise disjoint 
	measurable subsets of $I$ having the desired properties.
\end{proof}

\begin{prop} \label{prop:dis-supp-O(k7)}
	Given any orthonormal subset $\{f_1, \dots, f_k\}$ of $L^2(I,\nu)$,
	there exist disjointly supported nonzero functions $g_1, \dots, g_k \in L^2(I,\nu)$ 
	such that for all $1 \leq i \leq k$, the inequality
	\[
		R_{\Delta_W}(g_i) \leq O(k^7) \max_{1 \leq j \leq k} R_{\Delta_W}(f_j)
	\] 
	holds.
\end{prop}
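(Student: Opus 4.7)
The plan is to apply \cref{lem:large-separated} to obtain pairwise disjoint measurable subsets $A_1,\dots,A_k$ of $I$, each of mass at least $1/2$ and pairwise separated in the pseudo-metric $d_F$ by at least $1/(4\sqrt{5}\,k^{3})$, where $F = (f_1,\dots,f_k)$. I would then take as test functions the ``smoothened'' indicators
\[
    g_i(x) \;=\; \lVert F(x)\rVert\,\psi_i(x),
    \qquad
    \psi_i(x) \;=\; \max\bigl\{0,\;1 - \beta\,d_F(x,A_i)\bigr\},
\]
with Lipschitz parameter $\beta = 8\sqrt{5}\,k^{3}$. Measurability of $\psi_i$ follows from \cref{lem:dist-to-set-meable}, and $g_i \in L^2(I,\nu)$ since $\lvert g_i\rvert \leq \lVert F\rVert$ and $\int_I \lVert F\rVert^{2} d_W(x)\,\mathrm{d}x = \sum_{j=1}^{k}\lVert f_j\rVert_v^{2} = k$. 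On $A_i$ the pseudo-distance $d_F(x,A_i)$ vanishes, so $\psi_i \equiv 1$ and $\lVert g_i\rVert_v^{2} \geq \int_{A_i}\lVert F\rVert^{2} d_W \geq 1/2$, which both confirms $g_i \neq 0$ and supplies the denominator bound in the Rayleigh quotient. Disjointness of the supports is forced by $\beta$: if $\psi_i(x)\psi_j(x)>0$ for $i\neq j$, then $d_F(x,A_i),d_F(x,A_j)<1/\beta$, so the triangle inequality for $d_F$ furnishes $a_i \in A_i$, $a_j \in A_j$ with $d_F(a_i,a_j)<2/\beta = 1/(4\sqrt{5}\,k^{3})$, contradicting part (b) of \cref{lem:large-separated}.

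The heart of the argument will be the pointwise comparison
\[
    \bigl(g_i(x)-g_i(y)\bigr)^{2} \;\leq\; 4\beta^{2}\,\lVert F(x)-F(y)\rVert^{2}
    \qquad \text{for all } x,y \in I.
\]
Assuming without loss of generality $\lVert F(x)\rVert \leq \lVert F(y)\rVert$, I would decompose asymmetrically as
\[
    g_i(x)-g_i(y) \;=\; \lVert F(x)\rVert\bigl(\psi_i(x)-\psi_i(y)\bigr) + \psi_i(y)\bigl(\lVert F(x)\rVert - \lVert F(y)\rVert\bigr),
\]
and then invoke $\psi_i \leq 1$, the Lipschitz estimate $\lvert\psi_i(x)-\psi_i(y)\rvert \leq \beta\,d_F(x,y)$, and the elementary identity
\[
    \lVert u-v\rVert^{2} \;=\; \bigl(\lVert u\rVert-\lVert v\rVert\bigr)^{2} + \lVert u\rVert\,\lVert v\rVert\,\lVert\overline{u}-\overline{v}\rVert^{2}
\]
applied to $u=F(x),\,v=F(y)$. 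Since $\lVert F(x)\rVert=\min(\lVert F(x)\rVert,\lVert F(y)\rVert)$, this identity gives $\lVert F(x)\rVert^{2}\,d_F(x,y)^{2} \leq \lVert F(x)\rVert\lVert F(y)\rVert\,d_F(x,y)^{2} \leq \lVert F(x)-F(y)\rVert^{2}$, from which the claimed pointwise inequality follows. Integrating against $W(x,y)\,\mathrm{d}x\,\mathrm{d}y$ and invoking \cref{eq:num-of-R_DeltaW} yields
\[
    \langle \Delta_W g_i, g_i\rangle_v \;\leq\; 4\beta^{2}\sum_{j=1}^{k}\langle \Delta_W f_j, f_j\rangle_v \;\leq\; 4\beta^{2}\,k\,\max_{1 \leq j \leq k} R_{\Delta_W}(f_j),
\]
and combining with $\lVert g_i\rVert_v^{2}\geq 1/2$ gives $R_{\Delta_W}(g_i) \leq 8\beta^{2}\,k\,\max_{j} R_{\Delta_W}(f_j) = O(k^{7})\max_{j} R_{\Delta_W}(f_j)$.

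The main obstacle is engineering the pointwise comparison. A naive symmetric split would leave the factor $\bigl((\lVert F(x)\rVert+\lVert F(y)\rVert)/2\bigr)^{2}$ multiplying $d_F(x,y)^{2}$, but the normalization identity only controls the product $\lVert u\rVert\lVert v\rVert\,\lVert\overline{u}-\overline{v}\rVert^{2}$ by $\lVert u-v\rVert^{2}$, not the strictly larger quantity $\bigl((\lVert u\rVert+\lVert v\rVert)/2\bigr)^{2}\lVert\overline{u}-\overline{v}\rVert^{2}$. The asymmetric decomposition above is what places the smaller of $\lVert F(x)\rVert$ and $\lVert F(y)\rVert$ alongside the $\psi_i$-difference, which is precisely what lets the $d_F$-Lipschitz factor be converted back into the $L^{2}$-difference $\lVert F(x)-F(y)\rVert$ required for the clean comparison of Rayleigh quotients.
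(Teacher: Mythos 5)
Your proposal follows essentially the same route as the paper: apply \cref{lem:large-separated} to get the separated sets $A_1,\dots,A_k$, form the smoothened indicators $g_i = \psi_i\,\lVert F\rVert$ with the same Lipschitz parameter $\beta = 2/\delta = 8\sqrt{5}\,k^3$, establish $\lVert g_i\rVert_v^2 \ge 1/2$ and disjoint supports exactly as the paper does, and reduce everything to a pointwise comparison of $\lvert g_i(x)-g_i(y)\rvert$ against $\lVert F(x)-F(y)\rVert$. The one genuine technical difference is in proving that comparison: the paper makes no WLOG and shows $\lVert F(y)\rVert\,d_F(x,y) \le 2\lVert F(x)-F(y)\rVert$ via a triangle-inequality computation, whereas you order $\lVert F(x)\rVert \le \lVert F(y)\rVert$, put the smaller norm next to the $\psi_i$-difference, and invoke the identity $\lVert u-v\rVert^2 = (\lVert u\rVert-\lVert v\rVert)^2 + \lVert u\rVert\lVert v\rVert\,\lVert\overline u-\overline v\rVert^2$ to get the sharper bound $\lVert F(x)\rVert\,d_F(x,y) \le \lVert F(x)-F(y)\rVert$ (constant $1$ in place of $2$); this gives you a marginally better explicit constant ($\approx 2560k^7$ versus the paper's $4000k^7$), though both are $O(k^7)$. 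The only small gap you should close is the degenerate case where $F(x) = \mathbf 0$ or $F(y) = \mathbf 0$: there $d_F(x,y) = \infty$ and the identity does not apply literally, but since then $\lvert g_i(x)-g_i(y)\rvert \le \lVert F(x)-F(y)\rVert$ directly from $\psi_i \le 1$, the claimed pointwise bound still holds; the paper notes this case explicitly and you should too.
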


\begin{proof}
	Let $\{f_1, \dots, f_k\}$ be an orthonormal subset of $L^2(I,\nu)$. 
	Choose pairwise disjoint measurable subsets $A_1, \dots, A_k$ of $I$ such that 
	\[
		\int_{A_i} \lVert F(x) \rVert^2 d_W(x)\, \mathrm{d}x \geq \frac{1}{2},
	\]	
	for all $1 \leq i \leq k$, 
	and for $i \neq j$, if $x \in A_i$ and $y \in A_j$, 
	then $d_F(x,y) \geq \frac{1}{4 \sqrt{5} k^3}$, 
	as guaranteed by \cref{lem:large-separated}. 
	Let $\delta = \frac{1}{4 \sqrt{5} k^3}$. 
	For each $1 \leq i \leq k$, define a function $\tau_i \colon I \to \mathbb{R}$ by 
	\[
		\tau_i(x) = \max \left\{ 0, 1 - \frac{2}{\delta} d_F(x,A_i) \right\},
	\]	
	for all $x \in I$. This is a measurable function thanks to \cref{lem:dist-to-set-meable}.
	Since the function $F$ is measurable, the function $g_i \colon I \to \mathbb{R}$,
	defined by $g_i(x) = \tau_i(x) \lVert F(x) \rVert$ for all $x \in I$, 
	is also measurable for every $1 \leq i \leq k$. 
	Moreover, it belongs to $L^2(I,\nu)$, as $\tau_i \leq 1$ 
	and the function $x \mapsto \lVert F(x) \rVert$ lies in $L^2(I,\nu)$. 
	For each $1 \leq i \leq k$, note that 
	\begin{align} \label{ineq:norm-gi}
		\lVert g_i \rVert_v^2 
		& = 
		\int_{I} \tau_i(x)^2 \lVert F(x) \rVert^2 d_W(x)\, \mathrm{d}x \nonumber
		\\
		& \geq 
		\int_{A_i} \tau_i(x)^2 \lVert F(x) \rVert^2 d_W(x)\, \mathrm{d}x \nonumber
		\\
		& =
		\int_{A_i} \lVert F(x) \rVert^2 d_W(x)\, \mathrm{d}x \nonumber
		\\
		& \geq 
		\frac{1}{2},
	\end{align}
	and hence, the function $g_i$ is nonzero. 
	Further, $g_1, \dots, g_k$ are disjointly supported functions. 
	Indeed, if $i, j \in \{1, \dots, k\}$ and $x \in I$ are such that 
	$g_i(x)$ and $g_j(x)$ are nonzero, then $\tau_i(x)$ and $\tau_j(x)$ 
	are nonzero. 
	Thus, we have $d_F(x,A_i) < \frac{\delta}{2}$ and $d_F(x,A_j) < \frac{\delta}{2}$, 
	and as a consequence, there exist $y_i \in A_i$ and $y_j \in A_j$ such that 
	$d_F(x,y_i) < \frac{\delta}{2}$ and $d_F(x,y_j) < \frac{\delta}{2}$. 
	Then, the triangle inequality for $d_F$ implies that $d_F(y_i,y_j) < \delta$, 
	which forces $i$ and  $j$ to be equal.

	Fix any $1 \leq i \leq k$. 
	We claim that for all $x, y \in I$, the inequality
	$\lvert g_i(x) - g_i(y) \rvert \leq \lVert F(x) - F(y) \rVert 
	\left( 1 + \frac{4}{\delta} \right)$ holds. 
	If $F(x) = \mathbf{0}$ or $F(y) = \mathbf{0}$, then using $\tau_i \leq 1$,
	we obtain the claimed inequality. 
	Assume that $F(x) \neq \mathbf{0}$ and $F(y) \neq \mathbf{0}$. 
	If $d_F(x,A_i) \geq \frac{\delta}{2}$ and $d_F(y,A_i) \geq \frac{\delta}{2}$, 
	or $d_F(x,A_i) < \frac{\delta}{2}$ and $d_F(y,A_i) < \frac{\delta}{2}$, then, 
	we have $\lvert \tau_i(x) - \tau_i(y) \rvert 
	\leq \frac{2}{\delta} \lvert d_F(x,A_i) - d_F(y,A_i) \rvert$.
	Moreover, if $d_F(x,A_i) < \frac{\delta}{2}$ and 
	$d_F(y,A_i) \geq \frac{\delta}{2}$, then it follows that
	\[
		\lvert \tau_i(x) - \tau_i(y) \rvert = 1 - \frac{2}{\delta} d_F(x,A_i)
		\leq \frac{2}{\delta} d_F(y,A_i) - \frac{2}{\delta} d_F(x,A_i),
	\] 
	and similarly, if $d_F(x,A_i) \geq \frac{\delta}{2}$ and 
	$d_F(y,A_i) < \frac{\delta}{2}$, then the inequality 
	$\lvert \tau_i(x) - \tau_i(y) \rvert 
	\leq \frac{2}{\delta} d_F(x,A_i) - \frac{2}{\delta} d_F(y,A_i)$ follows.
	Hence, in any of the cases, we obtain
	\[
		\lvert \tau_i(x) - \tau_i(y) \rvert 
		\leq \frac{2}{\delta} \lvert d_F(x,A_i) - d_F(y,A_i) \rvert 
		\leq \frac{2}{\delta} d_F(x,y).
	\] 
	Now note that 
	\begin{align*} 
		\lvert g_i(x) - g_i(y) \rvert 
		& = 
		\left\lvert \tau_i(x) \lVert F(x) \rVert 
		- \tau_i(y) \lVert F(y) \rVert \right\rvert
		\\
		& \leq
		\left\lvert \tau_i(x) \lVert F(x) \rVert 
		- \tau_i(x) \lVert F(y) \rVert \right\rvert
		+ \left\lvert \tau_i(x) \lVert F(y) \rVert 
		- \tau_i(y) \lVert F(y) \rVert \right\rvert 
		\\
		& =
		\tau_i(x) \left\lvert \lVert F(x) \rVert - \lVert F(y) \rVert \right\rvert
		+ \lVert F(y) \rVert \lvert \tau_i(x) - \tau_i(y) \rvert
		\\
		& \leq 
		\lVert F(x) - F(y) \rVert + \frac{2}{\delta} \lVert F(y) \rVert d_F(x,y),
	\end{align*}
	and that 
	\begin{align*}
		\lVert F(y) \rVert d_F(x,y) 
		& = 
		\lVert F(y) \rVert \left\lVert \frac{F(x)}{\lVert F(x) \rVert} 
		- \frac{F(y)}{\lVert F(y) \rVert} \right\rVert
		\\
		& =
		\left\lVert \frac{\lVert F(y) \rVert}{\lVert F(x) \rVert} 
		F(x) - F(y) \right\rVert
		\\
		& \leq
		\left\lVert \frac{\lVert F(y) \rVert}{\lVert F(x) \rVert} 
		F(x) - F(x) \right\rVert + \lVert F(x) - F(y) \rVert
		\\
		& =
		\left\lvert \frac{\lVert F(y) \rVert}{\lVert F(x) \rVert} - 1 \right\rvert
		\lVert F(x) \rVert + \lVert F(x) - F(y) \rVert
		\\
		& =
		\left\lvert \lVert F(x) \rVert - \lVert F(y) \rVert \right\rvert
		+ \lVert F(x) - F(y) \rVert
		\\
		& \leq
		2 \lVert F(x) - F(y) \rVert.
	\end{align*}
	Thus, we obtain
	\begin{equation} \label{ineq:gix-giy}
		\lvert g_i(x) - g_i(y) \rvert 
		\leq \lVert F(x) - F(y) \rVert \left( 1 + \frac{4}{\delta} \right).
	\end{equation}
	Then, \cref{eq:num-of-R_DeltaW} and \cref{ineq:gix-giy} together imply that
	\begin{align*}
		\langle \Delta_W g_i, g_i \rangle_v 
		& =
		\frac{1}{2} \int_{I} \int_{I} (g_i(x) - g_i(y))^2 W(x,y)\, 
		\mathrm{d}x\, \mathrm{d}y
		\\
		& \leq 
		\frac{1}{2} \left( 1 + \frac{4}{\delta} \right)^2 \int_{I} \int_{I} 
		\lVert F(x) - F(y) \rVert^2 W(x,y)\, \mathrm{d}x\, \mathrm{d}y
		\\
		& =
		\frac{1}{2} \left( 1 + \frac{4}{\delta} \right)^2 \sum_{j=1}^{k}
		\int_{I} \int_{I} (f_j(x) - f_j(y))^2 W(x,y)\, \mathrm{d}x\, \mathrm{d}y
		\\
		& =
		\left( 1 + \frac{4}{\delta} \right)^2 \sum_{j=1}^{k} R_{\Delta_W}(f_j)
		\tag{using $\lVert f_j \rVert_v = 1$ for each $1 \leq j \leq k$}
		\\
		& \leq
		\frac{25 k}{\delta^2} \max_{1 \leq j \leq k} R_{\Delta_W}(f_j) 
		\tag{using $\delta \leq 1$}
		\\
		& =
		2000 k^7 \max_{1 \leq j \leq k} R_{\Delta_W}(f_j).
	\end{align*}
	Combining this with \cref{ineq:norm-gi} gives us the inequality
	\[
		R_{\Delta_W}(g_i) \leq 4000 k^7 \max_{1 \leq j \leq k} R_{\Delta_W}(f_j),
	\] 
	for all $1 \leq i \leq k$.
\end{proof}

\begin{lemma} 
	\label{lem:supp-subset-small-exp}
	Let $g$ be a nonzero element of $L^2(I,\nu)$. 
	Then there exists a measurable subset $A$ of its support 
	such that $A$ has positive measure,
	and 
	satisfies the inequality $h_W(A) \leq \sqrt{2 R_{\Delta_W}(g)}$. 
\end{lemma}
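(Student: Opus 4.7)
The plan is to run the classical Cheeger sweep argument, adapted to the graphon framework. First I would replace $g$ by $f = |g|$; since $\bigl\lvert |g|(x) - |g|(y) \bigr\rvert \leq |g(x) - g(y)|$, \cref{eq:num-of-R_DeltaW} gives $\langle \Delta_W f, f\rangle_v \leq \langle \Delta_W g, g\rangle_v$, while $\lVert f \rVert_v = \lVert g \rVert_v$ and $f$ has the same support as $g$; hence $R_{\Delta_W}(f) \leq R_{\Delta_W}(g)$. It therefore suffices to produce, among the superlevel sets $A_t = \{x \in I : f(x) > t\}$ for $t > 0$ (each of which is a measurable subset of the support of $g$), one of positive $\nu$-measure satisfying $h_W(A_t) \leq \sqrt{2 R_{\Delta_W}(f)}$.

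The core of the argument consists of two coarea-type identities and one Cauchy--Schwarz estimate. Writing $f(x)^2 = \int_0^{\infty} 1_{\{s < f(x)^2\}}\, \mathrm{d}s$ and $|f(x)^2 - f(y)^2| = \int_0^{\infty} |1_{A_{\sqrt{s}}}(x) - 1_{A_{\sqrt{s}}}(y)|\, \mathrm{d}s$, applying Fubini--Tonelli, and changing variables $s = t^2$, I would obtain
\begin{align*}
	\lVert f \rVert_v^2 &= 2\int_0^{\infty} t\, \nu(A_t)\, \mathrm{d}t, \\
	\int_{I} \int_{I} |f(x)^2 - f(y)^2|\, W(x,y)\, \mathrm{d}x\, \mathrm{d}y
	&= 4\int_0^{\infty} t\, \eta(A_t \times A_t^c)\, \mathrm{d}t.
\end{align*}
Factoring $|f(x)^2 - f(y)^2| = |f(x) - f(y)|\, (f(x) + f(y))$ and then invoking Cauchy--Schwarz, \cref{eq:num-of-R_DeltaW}, and the bound $(f(x) + f(y))^2 \leq 2(f(x)^2 + f(y)^2)$ yields
\[
	\int_{I} \int_{I} |f(x)^2 - f(y)^2|\, W(x,y)\, \mathrm{d}x\, \mathrm{d}y
	\leq 2\sqrt{2\langle \Delta_W f, f\rangle_v}\, \lVert f \rVert_v.
\]
Combining the three estimates gives $\int_0^{\infty} t\, \eta(A_t \times A_t^c)\, \mathrm{d}t \leq \sqrt{2 R_{\Delta_W}(f)} \cdot \int_0^{\infty} t\, \nu(A_t)\, \mathrm{d}t$.

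Since the right-hand side is a $t$-weighted average of the ratios $h_W(A_t) = \eta(A_t \times A_t^c) / \nu(A_t)$ over the set $\{t > 0 : \nu(A_t) > 0\}$, which has positive Lebesgue measure because $\lVert f \rVert_v > 0$, some $t_0$ in this set must satisfy $h_W(A_{t_0}) \leq \sqrt{2 R_{\Delta_W}(f)} \leq \sqrt{2 R_{\Delta_W}(g)}$; this $A_{t_0}$ is the desired $A$. The main obstacle is not conceptual but technical bookkeeping: verifying the coarea identities, the Fubini--Tonelli exchanges, and the measurability of $t \mapsto \nu(A_t)$ and $t \mapsto \eta(A_t \times A_t^c)$. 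The latter two functions are monotone non-increasing in $t$, hence Borel measurable, and because every integrand is nonnegative the exchanges require no integrability hypothesis beyond what is already available from $f \in L^2(I,\nu)$.
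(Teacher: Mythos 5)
Your proposal is correct and follows essentially the same route as the paper: a Cheeger sweep over superlevel sets, two coarea-type identities obtained by Fubini--Tonelli, a Cauchy--Schwarz estimate on $\lvert f(x)^2 - f(y)^2 \rvert = \lvert f(x)-f(y)\rvert\,(f(x)+f(y))$, and an averaging (pigeonhole) argument to extract a good threshold $t_0$. The only cosmetic deviations are that you pass to $f = \lvert g\rvert$ and sweep the sets $\{f > t\}$ with weight $t\,\mathrm{d}t$, whereas the paper sweeps $\{g^2 > t\}$ unweighted; these are related by the change of variable $s = t^2$ and yield identical estimates.
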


\begin{proof}
	For every $t \geq 0$, let $A_t$ denote the set $\{x \in I \mid g(x)^2 > t\}$.
	Observe that $A_t$ is a measurable subset of the support of $g$ for each $t$.
	We claim that there is a nonnegative real number $t_0$ such that 
	the set $A_{t_0}$ has positive measure and it satisfies the inequality 
	$h_W(A_{t_0}) \leq \sqrt{2 R_{\Delta_W}(g)}$.

	We have
	\begin{align*}
		& \hspace{0.5cm} 
		\int_{0}^{\infty} \eta(A_t \times A_t^c) \, \mathrm{d}t
		\\
		& =
		\int_{0}^{\infty} \int_{I} \int_{I} 1_{A_t}(x) 1_{A_t^c}(y) W(x,y)\, 
		\mathrm{d}x\, \mathrm{d}y\, \mathrm{d}t
		\\
		& =
		\int_{\substack{(x,y) \in I \times I \\ g(y)^2 < g(x)^2}} 
		\left( \int_{g(y)^2}^{g(x)^2} 1\, \mathrm{d}t \right)
		W(x,y)\, \mathrm{d}x\, \mathrm{d}y
		\\
		& =
		\int_{\substack{(x,y) \in I \times I \\ g(y)^2 < g(x)^2}}
		(g(x)^2 - g(y)^2) W(x,y)\, \mathrm{d}x\, \mathrm{d}y
		\\
		& =
		\frac{1}{2} \int_{I} \int_{I} \lvert g(x)^2 - g(y)^2 \rvert W(x,y)\, 
		\mathrm{d}x\, \mathrm{d}y
		\\
		& =
		\frac{1}{2} \int_{I} \int_{I} \lvert g(x) - g(y) \rvert 
		\lvert g(x) + g(y) \rvert W(x,y)\, \mathrm{d}x\, \mathrm{d}y
		\\
		& \leq
		\frac{1}{2} \left( \int_{I} \int_{I} (g(x) - g(y))^2 W(x,y)\, 
		\mathrm{d}x\, \mathrm{d}y \right)^{\frac{1}{2}} 
		\left( \int_{I} \int_{I} (g(x) + g(y))^2 W(x,y)\, 
		\mathrm{d}x\, \mathrm{d}y \right)^{\frac{1}{2}}
		\tag{using the Cauchy--Schwarz inequality}
		\\
		& \leq
		\frac{1}{2} \left( \int_{I} \int_{I} (g(x) - g(y))^2 W(x,y)\, 
		\mathrm{d}x\, \mathrm{d}y \right)^{\frac{1}{2}} 
		\left( \int_{I} \int_{I} (2g(x)^2 + 2g(y)^2) W(x,y)\,
		\mathrm{d}x\, \mathrm{d}y \right)^{\frac{1}{2}}
		\\
		& =
		\frac{1}{2} \left( \int_{I} \int_{I} (g(x) - g(y))^2 W(x,y)\, 
		\mathrm{d}x\, \mathrm{d}y \right)^{\frac{1}{2}} 
		\left( 4 \int_{I} \int_{I} g(x)^2 W(x,y)\, 
		\mathrm{d}x\, \mathrm{d}y \right)^{\frac{1}{2}} 
		\\
		& =
		\left( \int_{I} \int_{I} (g(x) - g(y))^2 W(x,y)\, 
		\mathrm{d}x\, \mathrm{d}y \right)^{\frac{1}{2}} 
		\left( \int_{I} \int_{I} g(x)^2 W(x,y)\, 
		\mathrm{d}x\, \mathrm{d}y \right)^{\frac{1}{2}}, 
	\end{align*}
	and
	\begin{align*}
		\int_{0}^{\infty} \nu(A_t)\, \mathrm{d}t
		& =
		\int_{0}^{\infty} \int_{I} \int_{I} 1_{A_t}(x) W(x,y)\, 
		\mathrm{d}x\, \mathrm{d}y\, \mathrm{d}t
		\\
		& =
		\int_{I} \int_{I} \left( \int_{0}^{g(x)^2} 1\, \mathrm{d}t \right) W(x,y)\,
		\mathrm{d}x\, \mathrm{d}y
		\\
		& =
		\int_{I} \int_{I} g(x)^2 W(x,y)\, \mathrm{d}x\, \mathrm{d}y,
	\end{align*}
	which is equal to $\lVert g \rVert_v^2$, and it is positive, as $g$ is nonzero.
	Hence, it follows that
	\[
		\frac{\int_{0}^{\infty} \eta(A_t \times A_t^c) \, \mathrm{d}t}
		{\int_{0}^{\infty} \nu(A_t)\, \mathrm{d}t}
		\leq \sqrt{\frac{\int_{I} \int_{I} (g(x) - g(y))^2 W(x,y)\, 
		\mathrm{d}x\, \mathrm{d}y}
		{\int_{I} \int_{I} g(x)^2 W(x,y)\, \mathrm{d}x\, \mathrm{d}y}} 	
		= \sqrt{2 R_{\Delta_W}(g)}. 
	\] 
	Let $S$ denote the support of the function $t \mapsto \nu(A_t)$ defined on $[0,\infty)$.
	Since $\int_{0}^{\infty} \nu(A_t)\, \mathrm{d}t$ is positive, 
	it follows that the set $S$ has positive measure. 
	Then, using the pigeonhole principle 
	(see \cite[Lemma 4.3]{Mugdha-duaCBGraphons25} for instance),
	we conclude that there is an element $t_0$ of $S$ such that 
	 \[
		\frac{\eta(A_{t_0} \times A_{t_0}^c)}{\nu(A_{t_0})}
		\leq \frac{\int_S \eta(A_t \times A_t^c) \, \mathrm{d}t}
		{\int_S \nu(A_t)\, \mathrm{d}t}
		\leq \frac{\int_{0}^{\infty} \eta(A_t \times A_t^c) \, \mathrm{d}t}
		{\int_{0}^{\infty} \nu(A_t)\, \mathrm{d}t}.
	\] 
	Note that the set $A_{t_0}$ has positive measure and that the inequality 
	$h_W(A_{t_0}) \leq \sqrt{2 R_{\Delta_W}(g)}$ holds.
\end{proof}

\section*{Acknowledgements}

The author is grateful to Jyoti Prakash Saha for suggesting the problem, 
and for his valuable comments on the earlier versions of this article.
The author would like to thank Angshuman Bhattacharya and Somnath Pradhan 
for having useful discussions. 
The author also acknowledges the fellowship 
from the University Grants Commission with reference number 231620172128.

\begin{bibdiv}
	\begin{biblist}
	
	\bib{Alon-vertexCheeger86}{article}{
		  author={Alon, N.},
		   title={Eigenvalues and expanders},
			date={1986},
			ISSN={0209-9683},
		 journal={Combinatorica},
		  volume={6},
		  number={2},
		   pages={83\ndash 96},
			 url={https://doi.org/10.1007/BF02579166},
			note={Theory of computing (Singer Island, Fla., 1984)},
		  review={\MR{875835}},
	}
	
	\bib{Alon-Milman85}{article}{
		  author={Alon, N.},
		  author={Milman, V.~D.},
		   title={{$\lambda_1,$} isoperimetric inequalities for graphs, and
	  superconcentrators},
			date={1985},
			ISSN={0095-8956,1096-0902},
		 journal={J. Combin. Theory Ser. B},
		  volume={38},
		  number={1},
		   pages={73\ndash 88},
			 url={https://doi.org/10.1016/0095-8956(85)90092-9},
		  review={\MR{782626}},
	}
	
	\bib{Lovasz-etal-Counthomo06}{incollection}{
		  author={Borgs, Christian},
		  author={Chayes, Jennifer},
		  author={Lov\'asz, L\'aszl\'{o}},
		  author={S\'os, Vera~T.},
		  author={Vesztergombi, Katalin},
		   title={Counting graph homomorphisms},
			date={2006},
	   booktitle={Topics in discrete mathematics},
		  series={Algorithms Combin.},
		  volume={26},
	   publisher={Springer, Berlin},
		   pages={315\ndash 371},
			 url={https://doi.org/10.1007/3-540-33700-8_18},
		  review={\MR{2249277}},
	}
	
	\bib{Lovasz-etal-cgtseqDenseGraphs08}{article}{
		  author={Borgs, C.},
		  author={Chayes, J.~T.},
		  author={Lov\'asz, L.},
		  author={S\'os, V.~T.},
		  author={Vesztergombi, K.},
		   title={Convergent sequences of dense graphs. {I}. {S}ubgraph
	  frequencies, metric properties and testing},
			date={2008},
			ISSN={0001-8708,1090-2082},
		 journal={Adv. Math.},
		  volume={219},
		  number={6},
		   pages={1801\ndash 1851},
			 url={https://doi.org/10.1016/j.aim.2008.07.008},
		  review={\MR{2455626}},
	}
	
	\bib{Buser-manifolds82}{article}{
		  author={Buser, Peter},
		   title={A note on the isoperimetric constant},
			date={1982},
			ISSN={0012-9593},
		 journal={Ann. Sci. \'Ecole Norm. Sup. (4)},
		  volume={15},
		  number={2},
		   pages={213\ndash 230},
			 url={http://www.numdam.org/item?id=ASENS_1982_4_15_2_213_0},
		  review={\MR{683635}},
	}
	
	\bib{Cheeger-manifolds70}{incollection}{
		  author={Cheeger, Jeff},
		   title={A lower bound for the smallest eigenvalue of the {L}aplacian},
			date={1970},
	   booktitle={Problems in analysis ({S}ympos. in honor of {S}alomon {B}ochner,
	  {P}rinceton {U}niv., {P}rinceton, {N}.{J}., 1969)},
	   publisher={Princeton Univ. Press, Princeton, NJ},
		   pages={195\ndash 199},
		  review={\MR{402831}},
	}
	
	\bib{Daneshgar-etal-HOCIconj12}{article}{
		  author={Daneshgar, Amir},
		  author={Javadi, Ramin},
		  author={Miclo, Laurent},
		   title={On nodal domains and higher-order {C}heeger inequalities of
	  finite reversible {M}arkov processes},
			date={2012},
			ISSN={0304-4149,1879-209X},
		 journal={Stochastic Process. Appl.},
		  volume={122},
		  number={4},
		   pages={1748\ndash 1776},
			 url={https://doi.org/10.1016/j.spa.2012.02.009},
		  review={\MR{2914771}},
	}
	
	\bib{Dodziuk84}{article}{
		  author={Dodziuk, Jozef},
		   title={Difference equations, isoperimetric inequality and transience of
	  certain random walks},
			date={1984},
			ISSN={0002-9947,1088-6850},
		 journal={Trans. Amer. Math. Soc.},
		  volume={284},
		  number={2},
		   pages={787\ndash 794},
			 url={https://doi.org/10.2307/1999107},
		  review={\MR{743744}},
	}
	
	\bib{Abhishek-Mahan24}{article}{
		  author={Khetan, Abhishek},
		  author={Mj, Mahan},
		   title={Cheeger inequalities for graph limits},
			date={2024},
			ISSN={0373-0956,1777-5310},
		 journal={Ann. Inst. Fourier (Grenoble)},
		  volume={74},
		  number={1},
		   pages={257\ndash 305},
			 url={https://doi.org/10.5802/aif.3584},
		  review={\MR{4748172}},
	}
	
	\bib{Higher-Cheeger}{article}{
		  author={Lee, James~R.},
		  author={Gharan, Shayan~Oveis},
		  author={Trevisan, Luca},
		   title={Multiway spectral partitioning and higher-order {C}heeger
	  inequalities},
			date={2014},
			ISSN={0004-5411,1557-735X},
		 journal={J. ACM},
		  volume={61},
		  number={6},
		   pages={Art. 37, 30},
			 url={https://doi.org/10.1145/2665063},
		  review={\MR{3293072}},
	}
	
	\bib{Lovasz-Szegedy-limitsofDenseGraphs06}{article}{
		  author={Lov\'asz, L\'aszl\'{o}},
		  author={Szegedy, Bal\'azs},
		   title={Limits of dense graph sequences},
			date={2006},
			ISSN={0095-8956,1096-0902},
		 journal={J. Combin. Theory Ser. B},
		  volume={96},
		  number={6},
		   pages={933\ndash 957},
			 url={https://doi.org/10.1016/j.jctb.2006.05.002},
		  review={\MR{2274085}},
	}
	
	\bib{Lawler-Sokal-MarkovCheeger88}{article}{
		  author={Lawler, Gregory~F.},
		  author={Sokal, Alan~D.},
		   title={Bounds on the {$L^2$} spectrum for {M}arkov chains and {M}arkov
	  processes: a generalization of {C}heeger's inequality},
			date={1988},
			ISSN={0002-9947,1088-6850},
		 journal={Trans. Amer. Math. Soc.},
		  volume={309},
		  number={2},
		   pages={557\ndash 580},
			 url={https://doi.org/10.2307/2000925},
		  review={\MR{930082}},
	}
	
	\bib{Miclo-HOCIconj}{article}{
		  author={Miclo, Laurent},
		   title={On eigenfunctions of {M}arkov processes on trees},
			date={2008},
			ISSN={0178-8051,1432-2064},
		 journal={Probab. Theory Related Fields},
		  volume={142},
		  number={3-4},
		   pages={561\ndash 594},
			 url={https://doi.org/10.1007/s00440-007-0115-9},
		  review={\MR{2438701}},
	}
	
	\bib{Miclo-MarkovSpectrum15}{article}{
		  author={Miclo, Laurent},
		   title={On hyperboundedness and spectrum of {M}arkov operators},
			date={2015},
			ISSN={0020-9910,1432-1297},
		 journal={Invent. Math.},
		  volume={200},
		  number={1},
		   pages={311\ndash 343},
			 url={https://doi.org/10.1007/s00222-014-0538-8},
		  review={\MR{3323580}},
	}
	
	\bib{Mugdha-duaCBGraphons25}{misc}{
		  author={Pokharanakar, Mugdha~Mahesh},
		   title={The dual {C}heeger--{B}user inequality for graphons},
			date={2025},
			 url={https://arxiv.org/abs/2502.15213},
			note={Available at \url{https://arxiv.org/abs/2502.15213}},
	}
	
	\bib{Tanner-vertexCheegerEasy}{article}{
		  author={Tanner, R.~Michael},
		   title={Explicit concentrators from generalized {$N$}-gons},
			date={1984},
			ISSN={0196-5212},
		 journal={SIAM J. Algebraic Discrete Methods},
		  volume={5},
		  number={3},
		   pages={287\ndash 293},
			 url={https://doi.org/10.1137/0605030},
		  review={\MR{752035}},
	}
	
	\bib{Trevisan-notes-expanders}{misc}{
		  author={Trevisan, Luca},
		   title={Lecture notes on graph partitioning, expanders and spectral
	  methods},
			date={2017},
			note={Available at
	  \url{https://lucatrevisan.github.io/books/expanders-2016.pdf}},
	}
	
	\bib{Wang-MarkovSpectralGap14}{article}{
		  author={Wang, Feng-Yu},
		   title={Criteria of spectral gap for {M}arkov operators},
			date={2014},
			ISSN={0022-1236,1096-0783},
		 journal={J. Funct. Anal.},
		  volume={266},
		  number={4},
		   pages={2137\ndash 2152},
			 url={https://doi.org/10.1016/j.jfa.2013.11.016},
		  review={\MR{3150155}},
	}
	
	\end{biblist}
	\end{bibdiv}
	
\end{document}